\newtheorem{theorem}{Theorem}[section]
\newtheorem{remark}[theorem]{Remark}
\newtheorem{proposition}[theorem]{Proposition}
\newcommand{\eps}{\varepsilon}
\title{Vanishing diffusion limits for planar fronts \\ in bistable models with saturation}
\author{Maurizio Garrione}
\date{\small{Dipartimento di Matematica, Politecnico di Milano \\ 
Piazza Leonardo da Vinci 32, 20133 Milano \\
e-mail address: \tt{maurizio.garrione@polimi.it}}}
\begin{document}
\maketitle

\begin{abstract}
We deal with heteroclinic planar fronts for parameter-dependent reaction-diffusion equations with bistable reaction and saturating diffusive term like 
\begin{equation}\label{eqab}
u_t=\eps \, \textnormal{div}\, \left(\frac{\nabla u}{\sqrt{1+\vert \nabla u \vert^2}}\right) + f(u), \quad u=u(x, t), \; x \in \mathbb{R}^n, \, t \in \mathbb{R},
\end{equation}
analyzing in particular their behavior for $\eps \to 0$. First, we construct monotone and non-monotone planar traveling waves, using a change of variables allowing to analyze a two-point problem for a suitable first-order reduction of \eqref{eqab}; then, we investigate their asymptotic behavior for $\eps \to 0$, showing in particular that the convergence of the critical fronts to a suitable step function may occur passing through discontinuous solutions. 
\end{abstract}

\textbf{Keywords:} saturating diffusion, vanishing diffusion limit, mean curvature operator, traveling fronts, bistable reaction
\smallbreak
\textbf{MSC2010:} 35K93, 35K57, 35C07, 34C37. 

\section{Introduction}

In this paper, we analyze the behavior of traveling-wave type solutions for a reaction-diffusion model with saturating diffusive term and bistable reaction in dependence on a small parameter $\eps$, showing that the presence of the saturation induces singularities for $\eps$ sufficiently small and thus the limit procedure for $\eps \to 0$ necessarily passes through discontinuous steady states.
\smallbreak
In the analysis of reaction-diffusion models, traveling waves are probably the first pattern to be studied, since despite their quite simple form they often manage to give some useful insight on the dynamics. Their appearance is essentially due to the interplay between the spatial action exerted by the diffusion and the constructive/destructive effect due to the reaction, which contribute to create one (or more) profile propagating with a fixed speed $c$ and having parallel planes as level surfaces. 
\smallbreak
Usually, it is assumed that both $u\equiv 0$ and $u \equiv 1$ are equilibrium solutions and such profiles take values between $0$ and $1$, as in \cite{Lut} and in the celebrated paper by Fisher \cite{Fis}, where the unknown $u$ indicated in fact the relative concentration of a gene undergoing an advantageous mutation in a population. In this case, the natural waves to be studied are the ones leading to a complete spread of the advantageous gene starting with $0$-concentration, namely the ones connecting the equilibrium states $0$ and $1$. Both the shapes of the reaction and of the diffusion term play a primary role for the existence of such solutions, and may give rise to several different portraits.
\smallbreak
As for the former, for the linear model 
\begin{equation}\label{linintro}
u_t= \Delta u + f(u), \quad u=u(x, t), \; x \in \mathbb{R}^n, \, t \in \mathbb{R},
\end{equation}
it has been proved (see, e.g., \cite{AroWei, KolPetPis}) that if the reaction is always positive and linearly controlled near $0$ and $1$ then there exists a (monotone, cf. \cite{FifM}, and regular) heteroclinic profile between $0$ and $1$ for any (positive) speed starting from a suitable value $c^*$, named \emph{critical speed}. On the other hand, if the reaction is non-negative in a neighborhood of $0$, then there exists a unique admissible speed for (regular and monotone) heteroclinic profiles between $0$ and $1$. This means that reactions helping the time growth of the substance at any level of concentration give rise to multiplicity of admissible transitions, while if the reaction obstructs the growth of $u$ penalizing low concentrations (this being also called \emph{strong Allee effect} in biological models), then the speed of the profile is the result of a careful balance between the strengths of the diffusion and of the reaction.
\smallbreak
The role of diffusions other than the linear one has instead been studied more recently and has given rise to a huge number of works in many different directions. Our focus is here on the so-called \emph{strongly saturating diffusions}, whose study was introduced by Rosenau and co-workers \cite{KurRos, Ros}. A motivation for considering such diffusions lies in the need to restore the finiteness of the energy along sharp interfaces, in order to possibly admit the existence of discontinuous solutions. A careful study performed in \cite{Ros} (see also \cite{KurRos}) for the $1$-dimensional case revealed that, for such a feature to hold, the diffusive term has to be of the kind $(P(u_x))_x$, where $P$ is a bounded increasing $C^1$-function satisfying
\begin{equation}\label{diff}
\int_0^{+\infty} s P'(s) \, ds < +\infty. 
\end{equation}
As a basic example, one could choose
$$
P(s)=\frac{s}{\sqrt{1+s^2}},
$$
corresponding to the so-called \emph{mean curvature operator} (in the Euclidean space). In dimension $n$, a natural extension can be obtained selecting diffusions having the form 
$
\textnormal{div}\,(\varphi(\vert \nabla u \vert) \nabla u), 
$  
where $\varphi$ is of class $C^1$, so that \eqref{diff} takes the form 
\begin{equation}\label{diffn}
\int_0^{+\infty} [s\varphi(s) + s^2\varphi'(s)] \, ds < +\infty,
\end{equation}
as we will see later on.
We will briefly comment about the consequences of assumption \eqref{diffn} on the features of planar fronts in Remark \ref{singolare}.
The conditions of strong saturation \eqref{diff} and \eqref{diffn} are opposed to the ones characterizing weakly saturating diffusions, reading as
\begin{equation}\label{weaksat}
\int_0^{+\infty} s P'(s) \, ds = +\infty \quad \textrm{ and } \quad \int_0^{+\infty} [s\varphi(s)+s^2\varphi'(s)] \, ds = +\infty. 
\end{equation}
While, for the solutions of the general PDE (possibly with convection), both weakly and strongly saturating diffusions are accompanied by the natural emergence of singularities (see, e.g., \cite{GooKurRos}), speaking about planar fronts it is the rate of saturation which plays a crucial role for the appearance of discontinuous solutions: in case \eqref{weaksat} is fulfilled, there are no deep qualitative differences in the dynamics of fronts with respect to the linear model \eqref{linintro}, as highlighted in Remark \ref{singolare}.
On the contrary, strongly saturating diffusions \eqref{diff} (or \eqref{diffn}) may create discontinuous stationary waves. As shown in \cite{GarSan, KurRos}, though condition \eqref{diff} reveals outcomes that are  analogous to the ones of the linear diffusion case for \emph{monostable} reactions (the aforementioned positive ones), in fact discontinuous steady states may appear if the reaction is of \emph{bistable} type, i.e.,  
if $f$ is negative in a right neighborhood of $0$ and changes sign in correspondence of a third equilibrium $\alpha \in (0, 1)$, see assumption \textbf{(H2)} in the next section. This occurs when the negative part of the reaction is so strong (in $L^1$-norm) that the weakness of the saturating diffusion is not able to counterbalance it giving rise to a regular profile. 
\smallbreak
This suggests that a loss of regularity should always appear in the vanishing diffusion limit of planar fronts for the problem 
\begin{equation}\label{saturatointro}
u_t= \eps \textnormal{div}\,(\varphi(\vert \nabla u \vert)\nabla u) + f(u), \quad u=u(x, t), \; x \in \mathbb{R}^n, \, t \in \mathbb{R}, 
\end{equation}
for any bistable reaction term $f$;
indeed, the smallness of the parameter $\eps$ makes the weakness of the saturating diffusion arbitrarily accentuated. Incidentally, recall that 
the study of vanishing viscosity limits is a typical procedure in hyperbolic dynamics: for instance, adding a small regularizing diffusive term in reaction-convection problems allows to recover entropy solutions as the limit of regular solutions for $\eps \to 0$ (see, e.g., \cite{Cro, CroMas, MatMer}). 
\smallbreak
In this paper, we thus deal with the limit of heteroclinic planar traveling waves of equation \eqref{saturatointro} for $\eps \to 0$, investigating both the behavior of the wave speeds and the shape of the limit profile. Assuming that 
$$
f(0)=f(\alpha)=f(1)=0, \qquad f(s) < 0 \textrm{ for } s \in (0, \alpha), \qquad f(s) > 0 \textrm{ for } s \in (\alpha, 1),
$$
for some $\alpha \in (0, 1)$, here we have to distinguish between \emph{monostable-type} fronts, namely fronts connecting $\alpha$ and $1$ (or $0$ and $\alpha$), and \emph{bistable-type} ones, connecting $0$ and $1$. As we will see in Section \ref{sez3}, for the first ones the picture is very similar to the one for the linear case, for which we refer, e.g., to \cite{HilKim}; see also \cite{AriCamMar, MalMarMat}. What is new for \emph{bistable-type} fronts is instead that the diffusion process 
\begin{center}
\emph{slows down on decreasing of $\eps$ until it occurs with $0$-speed \\ already for a critical positive value $\bar{\eps}$},
\end{center}
given by $\bar{\eps}=\int_0^1 f^-(s) \, ds$ (where $f^-(s)=\max\{-f, 0\}$). Then, the convergence for $\eps \to 0$ can only occur passing through discontinuous steady states (see Theorems \ref{velconv0} and \ref{convAE} below).
\smallbreak
In the proof of the above results, we exploit a suitable change of variables reducing the order of the problem, strictly related to the one introduced in \cite{MalMar} - see also \cite{GarSan} - and used in two slightly different versions, see Remark \ref{altroc} below. The core of the method then relies both in a shooting technique making use of lower and upper solutions arguments and in a direct convergence analysis deeply exploiting the properties of the mean curvature operator (in Remark \ref{remMink}, we make a brief comparison with the qualitatively different case of the so-called \emph{Minkowski curvature} or \emph{Born-Infeld} operator). This technique allows us to identify in a quite precise way the development of singularities and to make general considerations about nonmonotone heteroclinic traveling waves as well, as shown in Section \ref{sez2}.

\section{Construction of the traveling waves}\label{sez2}

Our focus is on the partial differential equation 
\begin{equation}\label{laPDE}
u_t=\eps \, \textnormal{div}\, (\varphi(\vert \nabla u \vert)\nabla u) + f(u), \quad u=u(x, t), \; x \in \mathbb{R}^n, \, t \in \mathbb{R},
\end{equation} 
where $\varphi:[0, +\infty) \to \mathbb{R}$, $\eps$ is a small parameter ($\eps \to 0$) and $f$ is a bistable reaction term. Precisely, we will assume the following:
\smallbreak
\noindent
\textbf{(H1)} $P(s):=s\varphi(\vert s \vert)$ is a $C^1$-function fulfilling \eqref{diff}, and $P'(s) > 0$ for every $s \in \mathbb{R}$;
\smallbreak
\noindent
\textbf{(H2)} $f: [0, 1] \to \mathbb{R}$ is continuous, $f(0)=0=f(1)$, there exists $\alpha \in (0, 1)$ such that $f(\alpha) = 0$ and $f(s)(s-\alpha) > 0$ for every $s \in (0, 1) \setminus \{0, \alpha, 1\}$. Moreover, $\int_0^1 f(s) \, ds > 0$, there exists $f'(\alpha) > 0$ and
\begin{equation}\label{ipof}
\vert f(s) \vert \leq f'(\alpha) \vert s-\alpha \vert \qquad \textrm{ for every } s \in [0, 1]. 
\end{equation}
Finally, there exists $l > 0$ such that $f(s) \geq -ls$, $f(s) \leq l(1-s)$ for every $s \in [0, 1]$. 
\smallbreak
As already mentioned in the Introduction, our aim is to study heteroclinic planar traveling waves for \eqref{laPDE}, namely solutions of the kind $u(x, t)=v(x \cdot e + ct)$ connecting two equilibria, where $e$ is a fixed vector on the unit sphere of $\mathbb{R}^n$.
Thanks to the fact that $\vert e \vert = 1$, replacing $u(x, t)=v(x\cdot e + ct)$ into \eqref{laPDE} provides 
\begin{equation}\label{TWgeneral1}
\eps(\varphi(\vert v' \vert)v')' - c v' + f(v) =0,
\end{equation}
which recalling the definition of $P$ and dividing both sides by $\eps$ can be rewritten as 
\begin{equation}\label{TWgeneral}
(P(v'))' - b_\eps v' + g_\eps(v) =0,
\end{equation}
where $b_\eps=c/\eps$ and $g_\eps(s)=f(s)/\eps$. In \eqref{TWgeneral1} and \eqref{TWgeneral}, we denote by $z$ the independent variable; moreover, we will highlight the dependence on the parameter $\eps$ by writing $c=c_\eps$. 

Let us briefly comment about hypotheses \textbf{(H1)}-\textbf{(H2)}. In analogy with \cite{Ros}, dealing with the $1$-dimensional case, a diffusive term fulfilling  \textbf{(H1)} will be called \emph{strongly saturating}; as we mention in Remark \ref{singolare}, such a terminology is justified by the appearance of a singularity resulting into the existence of discontinuous planar stationary waves, similarly as in the $1$-dimensional case. Notice that \textbf{(H1)} implies, in particular, that $P$ is bounded and
$
\lim_{s \to +\infty} \varphi(s) = 0;
$
moreover, the fact that $P' > 0$ ensures that the differential equation \eqref{TWgeneral} is nondegenerate.
As for \textbf{(H2)}, the sign condition on $f$ is referred to by saying that $f$ is a \emph{bistable reaction term}, and fixes the sign for the speeds of increasing traveling waves connecting $\alpha$ and $1$ (resp., $0$ and $\alpha$), which can only be positive (resp., negative); 
the assumption $\int_0^1 f(s) \, ds > 0$ fixes instead the sign for the speeds of increasing traveling waves connecting $0$ and $1$, which can only be positive (just integrate \eqref{TWgeneral1} on the whole real line and exploit the fact that $v'(-\infty)=v'(+\infty)=0$); the regularity condition \eqref{ipof} in $\alpha$ is required in order to have neat estimates of the critical speeds - see the observation after the proof of Proposition \ref{monotoni} - and, together with the linear controls near the equilibria $0$ and $1$, provides a complete equivalence between heteroclinic solutions of \eqref{TWgeneral} and solutions of the associated first-order reduction, see also the comment before Proposition \ref{monotoni}. 

Given now $q_1, q_2 \in \{0, \alpha, 1\}$ such that $f(q_1)=f(q_2)=0$, $q_1 < q_2$, we first observe that if $v$ is a solution of \eqref{TWgeneral} with $v(-\infty)=q_1$, $v(+\infty)=q_2$, the fact that \eqref{TWgeneral} is autonomous implies that also $z \mapsto v(z+\tau)$ solves \eqref{TWgeneral} and connects the same two equilibria at infinity, for every $\tau \in \mathbb{R}$. A whole one-parameter family of heteroclinics propagating with the same speed $c$ is thus automatically found. In case $z \mapsto v(z)$ is \emph{monotone} - thus being called a heteroclinic \emph{front} - we can recover the uniqueness for fixed speed $c$ by imposing
$$
v(0)=\frac{q_1+q_2}{2}.
$$
Moreover, for monotone solutions it is sufficient to focus on \emph{nonnegative} speeds: 
if $v$ is an increasing front having speed $b_\eps > 0$ and connecting $q_1 < q_2$, $w(z)=v(-z)$ solves
$$
(\varphi(\vert w' \vert)w')' + b_\eps w' + g_\eps(w) = 0,  
$$
is decreasing and connects $q_2$ with $q_1$, so it suffices to change the sign of the speed to have a front with opposite monotonicity.

Let us thus focus first on fronts. The procedure we use here is a first-order reduction making use of the change of variables exploited in \cite{GarSan, GarStr, MalMar}. 
Namely, the monotonicity of $v$ allows us to write $z=z(v)$ - where $v$ becomes the new independent variable - and obtain a first order differential equation for $\phi(v)=v'(z(v))$. By doing explicit computations, for which we refer also to \cite{GarSan}, this yields 
$$
\frac{d}{dv} Q(\phi(v)) - b_\eps \phi(v) + g_\eps(v) = 0,
$$
where $s \mapsto Q(s)$ is the primitive of $sP'(s)=s\varphi(s)+s^2\varphi'(s)$ satisfying $Q(0)=0$. Focusing for the moment on the case of increasing fronts (decreasing ones will be obtained by a change of variable, see the proof of Proposition \ref{monotoni}), we set $y(v)=Q(\phi(v))$ and notice that \eqref{diff} implies that the range of $Q$ is a bounded interval, say $[0, M_0)$. We thus obtain
\begin{equation}\label{ilsistemag}
\left\{
\begin{array}{l}
\displaystyle y'= b_\eps R(y) - g_\eps(v), \vspace{0.1cm}\\
y(q_1)=0, \, y(q_2)=0, \, 0 < y(v) < M_0 \textrm{ for } v \in (q_1, q_2),
\end{array}
\right.
\end{equation}
where $R$ denotes the functional inverse of $Q$, which is well defined because $(0, +\infty) \ni s \mapsto sP'(s)$ is positive in view of assumption \textbf{(H1)}. The fact that $Q([0, +\infty))=[0, M_0)$ implies that $R$ possesses a singularity, and this will turn into the possible existence of discontinuous stationary solutions (see also \cite[Section 2]{GarStr} and Remark \ref{singolare}). Obviously, the two boundary conditions in \eqref{ilsistemag} come from the fact that a monotone heteroclinic profile connecting $q_1$ and $q_2$ has to have zero derivative in correspondence of such equilibria.  As a notation, when dealing with systems having similar form to \eqref{ilsistemag}, we write explicitly the independent variable $v$ only in the expression of the reaction term. 

To make things more readable, henceforth we make the precise choice 
\begin{equation}\label{lascelta}
\varphi(s)=\frac{1}{\sqrt{1+s^2}} \quad \Rightarrow \quad P(s)=\frac{s}{\sqrt{1+s^2}},
\end{equation}
for which we can perform more explicit computations; however, our framework guarantees that, up to modifying the bounds on the critical speeds which will be given from here on - which essentially depend on the behavior of $R$ in $0$ - the results which follow can be stated in the same way also for the general equation \eqref{laPDE}  (see for instance the remarks after the convergence statements in Section \ref{sez3}). Under assumption \eqref{lascelta}, \eqref{TWgeneral1} and \eqref{TWgeneral} read respectively as
\begin{equation}\label{TW}
\eps\left(\frac{v'}{\sqrt{1+(v')^2}}\right)' - c v' + f(v) = 0
\end{equation}
and 
\begin{equation}\label{TWW}
\left(\frac{v'}{\sqrt{1+(v')^2}}\right)' - b_\eps v' + g_\eps(v) = 0,
\end{equation}
and we have 
$
R(s)=\frac{\sqrt{s(2-s)}}{1-s}.
$ 
Hence, \eqref{ilsistemag} becomes
\begin{equation}\label{ilsistema}
\left\{
\begin{array}{l}
\displaystyle y'= b_\eps \frac{\sqrt{y(2-y)}}{1-y} - g_\eps(v), \vspace{0.1cm}\\
y(q_1)=0, \, y(q_2)=0, \, 0 < y(v) < 1 \textrm{ for } v \in (q_1, q_2),
\end{array}
\right.
\end{equation}
where
\begin{equation}\label{cambio1}
y(v)=
\left(1 - \frac{1}{\sqrt{1+v'(z(v))^2}}\right).
\end{equation}
In view of assumption \textbf{(H2)}, once a solution $y$ of \eqref{ilsistema} is found, it is possible to recover a planar front connecting $q_1$ and $q_2$ by solving the Cauchy problem 
$$
\left\{
\begin{array}{l}
v'(z)= \displaystyle \frac{\sqrt{y(v(z))(2-y(v(z)))}}{1-y(v(z))} \vspace{0.1cm} \\
\displaystyle v(0)=\frac{q_1+q_2}{2}
\end{array}
\right.
$$ 
(cf. \cite[Remark 2.1 and Remark 2.2]{GarSan}).
Thanks to \cite[Proposition 3.2 and Proposition 3.9]{GarSan}, we can then characterize the fronts in dependence on $c$ as follows. 
\begin{proposition}\label{monotoni}
Fix $\eps > 0$ and assume \textbf{(H2)}. Moreover, set 
\begin{equation}\label{velcritica}
b_\eps^+=2\sqrt{g_\eps'(\alpha)}=2\sqrt{\frac{f'(\alpha)}{\eps}}.
\end{equation}
Then, there exist $0 < b_\eps^* < b_\eps^+$ such that regular heteroclinic fronts for \eqref{TWW} exist if and only if 
\begin{itemize}
\item $q_1=\alpha$, $q_2=1$, $b \geq b_\eps^+$ (increasing);
\item $q_1=0$, $q_2=\alpha$, $b \geq b_\eps^+$ (decreasing); 
\item $q_1=0$, $q_2=1$, $\eps > \int_0^1 f^-(s) \, ds$, $b=b_\eps^*$ (increasing). 
\end{itemize}
\end{proposition}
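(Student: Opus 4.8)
The plan is to transfer everything to the first-order problem \eqref{ilsistema}, for which the equivalence with regular heteroclinic fronts of \eqref{TWW} has already been recorded (this is exactly where \eqref{ipof} and the linear controls near the equilibria enter, guaranteeing that the corners of $y$ are attained only as $z\to\pm\infty$). Recall that along a front $R(y)=v'$, that $R$ has a single singularity at $y=M_0=1$, and that $g_\eps'(\alpha)=f'(\alpha)/\eps>0$. I would first settle the \emph{sign} of the admissible speeds by means of the energy identity obtained multiplying \eqref{TWgeneral1} by $v'$ and integrating over $\mathbb{R}$: since $(P(v'))'v'=\frac{d}{dz}Q(v')$ and $v'(\pm\infty)=0$, the curvature term drops out and one gets $c\int_{\mathbb{R}}(v')^2\,dz=\int_{q_1}^{q_2}f$, whence $\mathrm{sign}\,c=\mathrm{sign}\int_{q_1}^{q_2}f$. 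This yields $c>0$ for the $\alpha$--$1$ connection and, through $\int_0^1 f>0$, for the $0$--$1$ connection, while the $0$--$\alpha$ front is recovered as the reflection $w(z)=v(-z)$ of a negative-speed increasing front as explained before the statement; it thus suffices to work with $b_\eps\ge 0$.

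For the \emph{monostable-type} fronts ($q_1=\alpha$, $q_2=1$) I would run a shooting argument for the scalar equation in \eqref{ilsistema} on $(\alpha,1)$, where $g_\eps>0$. The threshold $b_\eps^+$ is entirely dictated by the local analysis at the unstable corner $(\alpha,0)$: since the mean curvature operator linearises to the Laplacian for vanishing slope (equivalently $R(y)\sim\sqrt{2y}$ as $y\to 0^+$), seeking $y\sim A(v-\alpha)^2$ forces, after setting $u=\sqrt{2A}$, the quadratic $u^2-b_\eps u+g_\eps'(\alpha)=0$, which admits a nonnegative admissible branch precisely when $b_\eps^2\ge 4g_\eps'(\alpha)$, i.e. $b_\eps\ge b_\eps^+=2\sqrt{g_\eps'(\alpha)}$. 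For $b_\eps<b_\eps^+$ the candidate trajectory is forced to oscillate about $\alpha$ and leaves the admissible strip $0<y<1$, so no front exists; for $b_\eps\ge b_\eps^+$ I would construct an admissible $y$ by a sub/supersolution argument, using the one-sided bound \eqref{ipof} (exactly the KPP-type control centred at $\alpha$) to confine $y$ below a supersolution that stays under the singularity and returns to $0$ at $v=1$. The $0$--$\alpha$ case then follows by the reflection above.

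For the \emph{bistable-type} front ($q_1=0$, $q_2=1$) I would shoot in the parameter $b_\eps$, starting from the corner $(0,0)$, whose outgoing trajectory is unique for every $b_\eps$ thanks to the bound $f(s)\ge -ls$ of \textbf{(H2)} (so that $0$ behaves as a saddle), and likewise at $(1,0)$ via $f(s)\le l(1-s)$. Since $\partial_{b_\eps}\!\big(b_\eps R(y)-g_\eps(v)\big)=R(y)>0$, the solution $y(\cdot\,;b_\eps)$ is monotone in $b_\eps$ by comparison: for small $b_\eps$ the term $-g_\eps>0$ on $(0,\alpha)$ pushes $y$ up, but the ensuing $g_\eps>0$ on $(\alpha,1)$ brings $y$ back to $0$ before $v=1$ (stalling), whereas for large $b_\eps$ the trajectory runs into the singularity $y=1$; continuity and strict monotonicity then isolate a unique connecting value $b_\eps^*>0$. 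The necessity of $\eps>\int_0^1 f^-$ comes for free from the same integration: on $(0,\alpha)$, using $R\ge 0$ and $g_\eps<0$, one finds $y(\alpha)>\tfrac1\eps\int_0^\alpha f^-\,ds=\tfrac1\eps\int_0^1 f^-\,ds$, so the constraint $y<M_0=1$ can hold only when $\eps>\int_0^1 f^-$; otherwise $y$ reaches $1$ (vertical slope $v'=+\infty$) inside $(0,\alpha)$ and the regular front degenerates into a discontinuous one. Finally $b_\eps^*<b_\eps^+$ would follow by comparing, on $(\alpha,1)$, the bistable trajectory (which crosses $\alpha$ with $y(\alpha)>0$) with the critical monostable one leaving $(\alpha,0)$ tangentially at speed $b_\eps^+$: were $b_\eps^*\ge b_\eps^+$, the former would lie strictly above the latter and could not return to $0$ at $v=1$.

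The main obstacle I expect lies in the bistable case, for two intertwined reasons. First, all three equilibria sit at degenerate, non-Lipschitz corners of \eqref{ilsistema} (because $R(y)\sim\sqrt{2y}$), so existence and uniqueness of the trajectories leaving $(0,0)$ and $(\alpha,0)$ and entering $(1,0)$ cannot be read off from standard Cauchy theory but must be extracted from the local quadratic analysis above; this is precisely where \eqref{ipof} and the linear bounds of \textbf{(H2)} are indispensable. Second, the shooting dichotomy (stalling versus blow-up at the singularity) and the strict inequality $b_\eps^*<b_\eps^+$ both rely on monotone comparison estimates that must be made uniform up to the corners, which is the delicate technical core; it is exactly the sharp interplay with the saturation threshold $\eps>\int_0^1 f^-$ that sets this operator apart from the linear diffusion.
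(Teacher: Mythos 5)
Your outline is mathematically sound, but it takes a different (much longer) route than the paper: the paper's proof of Proposition \ref{monotoni} is essentially a derivation by citation, checking that the hypotheses of \cite[Propositions 3.2 and 3.9]{GarSan} are met --- namely that \eqref{ipof} yields the control \eqref{controllog1} with $M=f'(\alpha)/\eps$ for the first item, and that $\eps>\int_0^1 f^-(s)\,ds$ translates into $\int_0^1 g_\eps^-(s)\,ds<1$ for the third --- plus, for the decreasing $0$--$\alpha$ front, the state-space reflection $Y(v)=y(1-v)$, $k_\eps(v)=-g_\eps(1-v)$ inside the first-order problem rather than your reflection $w(z)=v(-z)$ of the profile. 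What you propose is, in effect, a self-contained reconstruction of the content of those cited propositions: the energy identity $c\int_{\R}(v')^2\,dz=\int_{q_1}^{q_2}f$ for the sign of the speeds (the paper's parenthetical remark integrates \eqref{TWgeneral1} directly; your version, multiplying by $v'$ first, is the cleaner one), the quadratic $u^2-b_\eps u+g_\eps'(\alpha)=0$ at the corner $(\alpha,0)$ giving the threshold $b_\eps^+$, the sub/supersolution construction for $b\ge b_\eps^+$, the shooting-in-$b_\eps$ dichotomy for the $0$--$1$ connection, and the integral estimate $y(\alpha)\ge\frac{1}{\eps}\int_0^1 f^-(s)\,ds$ forcing $\eps>\int_0^1 f^-(s)\,ds$ for regularity. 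All of these are correct and are indeed the mechanisms behind the quoted results; what the paper's approach buys is brevity, while yours buys self-containedness and makes visible exactly where \eqref{ipof} and the saturation threshold enter.

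Two small technical points to adjust. First, the uniqueness of the forward trajectory of \eqref{ilsistema} emanating from $(0,0)$ is not a consequence of the bound $f(s)\ge -ls$; it follows from the strict negativity of $f$ near $0$ (which forces $y\ge\frac{1}{\eps}\int_0^v f^-$ immediately and allows a comparison argument, cf.\ \cite{BonSan}), whereas the linear bounds $f(s)\ge -ls$, $f(s)\le l(1-s)$ serve to guarantee that the equilibria are reached only as $z\to\pm\infty$, i.e.\ the equivalence between solutions of the two-point problem and genuine heteroclinics. Second, in the shooting dichotomy the failure mode for large $b_\eps$ need not be blow-up to $\{y=1\}$ strictly before $v=1$; it suffices that $y(1)>0$, and your monotone comparison in $b_\eps$ covers both cases, so the argument stands once stated this way.
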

\begin{proof}
The first item follows from the fact that $f\vert_{[\alpha, 1]}$ is positive and fulfills \eqref{ipof}, so that \cite[Proposition 3.2]{GarSan} finds application. Indeed, estimate \eqref{ipof} implies that $g_\eps$ in \eqref{ilsistema} satisfies the control
\begin{equation}\label{controllog1}
g_\eps(s) \leq \frac{M(s-\alpha)}{\sqrt{1-\min\{M, 1\}(s-\alpha)^2}}
\end{equation}
for $s > \alpha$, being $M=f'(\alpha)/\eps$, so that for any $b_\eps \geq 2\sqrt{f'(\alpha)/\eps}=2\sqrt{g_\eps'(\alpha)}$ there exists an increasing front connecting $\alpha$ and $1$ having speed $b_\eps$.

As for the second item, in view of \cite[Remark 2.3]{GarSan} a decreasing front from $\alpha$ to $0$ having speed $b_\eps$ corresponds to a solution $y$ of 
$$
\left\{
\begin{array}{l}
\displaystyle y'= -b_\eps \frac{\sqrt{y(2-y)}}{1-y} - g_\eps(v), \vspace{0.1cm}\\
y(0)=0, \, y(\alpha)=0, \, 0 < y(v) < 1 \textrm{ for } v \in (0, \alpha);
\end{array}
\right.
$$
setting $Y(v)=y(1-v)$,
we have that $Y$ satisfies 
$$
\left\{
\begin{array}{l}
\displaystyle Y'= b_\eps \frac{\sqrt{Y(2-Y)}}{1-Y} - k_\eps(v), \vspace{0.1cm}\\
Y(1)=0, \, Y(1-\alpha)=0, \, 0 < Y(v) < 1 \textrm{ for } v \in (1-\alpha, 1),
\end{array}
\right.
$$
with $k_\eps(v)=-g_\eps(1-v)$. Since the critical speed for such a problem is given by $2\sqrt{k_\eps'(1-\alpha)}$ $=$ $2\sqrt{g_\eps'(\alpha)}=b_\eps^+$ \cite[Proposition 3.2]{GarSan} (notice that $k_\eps$ satisfies the control required therein in the point $1-\alpha$), we deduce that decreasing fronts connecting $0$ and $\alpha$ exist for $b \geq b_\eps^+$.  

Finally, the third item is a direct consequence of \cite[Proposition 3.9]{GarSan}, after noticing that the assumption $\eps > \int_0^1 f^-(s) \, ds$ corresponds to $\int_0^1 g^-(s) \, ds < 1$, which is actually the condition needed therein in order to have regular fronts connecting $0$ and $1$.
\end{proof}
Notice that assumption \eqref{ipof} allows us to explicitly write the critical speed relative to fronts connecting $\alpha$ and $1$, being given by \eqref{velcritica},
so that $c_\eps^+ = \eps b_\eps^+= 2\sqrt{\eps f'(\alpha)}$ (as also observed in \cite{GarStr}). Thus, the more $\eps$ approaches $0$, the more the original speeds corresponding to monotone fronts tend to invade the whole real line.
\begin{remark}\label{stimaM}
\textnormal{
If \eqref{ipof} is not fulfilled, one can state a similar result but with a less precise bound on the value of $b_\eps^+$. On the one hand, looking at the dynamics near the equilibrium $\alpha$ one sees that it is always true that $b_\eps^+ \geq 2\sqrt{g_\eps'(\alpha)}$ (cf., e.g., \cite{CoeSan, GarSan}); on the other hand, as already remarked in the previous proof, it is proved in \cite{GarSan} that if there exists $M > 0$ so that \eqref{controllog1} is fulfilled, then $b_\eps^+ \leq 2\sqrt{M}$. Notice that if the given reaction term $f$ satisfies 
\begin{equation}\label{stimaGS}
f(s) \leq \frac{\widetilde{M}(s-\alpha)}{\sqrt{1-\min\{\widetilde{M}, 1\}(s-\alpha)^2}}
\end{equation}
for $s > \alpha$ then $g_\eps=f/\eps$ satisfies \eqref{controllog1} with $M'=\widetilde{M}/\eps$; therefore, the upper bound for the critical speed $c_\eps^+$ relative to monostable-type fronts for \eqref{TW} rescales as well with $\sqrt{\eps}$ when $f$ is replaced by $g_\eps=f/\eps$.}
\end{remark}
The third item in Proposition \ref{monotoni} means that the evolution is not able to support regular fronts if the strength of the negative part of $f$ is too high, as previously mentioned, because the solutions of \eqref{ilsistema} blow-up to the boundary $\{y=1\}$. Indeed, if $\eps$ is too small we have the following. 
\begin{proposition}\label{dss}
Fix $\eps \leq \int_0^1 f^-(s) \, ds$. Then, the only heteroclinic front connecting $0$ and $1$ for \eqref{TWW} is a discontinuous steady state. 
\end{proposition}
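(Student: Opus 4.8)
The plan is to combine the nonexistence part of Proposition \ref{monotoni} with a sign analysis of the reduced field in \eqref{ilsistema} near its singularity, and then to exhibit the steady state explicitly through the first integral.

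First I would rewrite the hypothesis as $\int_0^1 g_\eps^-(s)\,ds \ge 1$, which is exactly the failure of the condition ensuring regular fronts in the third item of Proposition \ref{monotoni}; hence no regular heteroclinic front connecting $0$ and $1$ exists. Any such front, which we may take increasing (the decreasing case being symmetric, cf. the proof of Proposition \ref{monotoni}), corresponds to a solution $y$ of \eqref{ilsistema} with $y(0)=y(1)=0$ that must therefore reach the singular level $y=1$ of $R$ at some interior $v_*\in(0,1)$, producing a vertical tangent and a jump in the profile.

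Next I would prove that such a front is necessarily stationary. Since $R(y)=\sqrt{y(2-y)}/(1-y)\to+\infty$ as $y\to 1^-$, the sign of $y'=b_\eps R(y)-g_\eps(v)$ near $y=1$ is dictated by $b_\eps R(y)$. If $b_\eps>0$ then $y'\to+\infty$ there, so $y$ reaches $1$ at a finite $v_*<1$ and can neither exceed $1$ nor decrease afterwards; but to close the connection at the equilibrium $1$ the solution must return to $y=0$, a contradiction. If $b_\eps<0$ then $y'\to-\infty$ near $y=1$, so $y$ cannot attain $1$ and the front would be regular, again excluded. Hence $b_\eps=0$ and the front is a steady state. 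Equivalently, reading \eqref{TW} distributionally, at a jump the saturated flux $\eps v'/\sqrt{1+(v')^2}=\eps$ is continuous while the convective term $-cv'$ contributes a Dirac mass proportional to the jump, so matching singular parts forces $c=0$.

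Finally I would construct this steady state and show it is unique up to translation. With $b_\eps=0$, integrating $y'=-g_\eps(v)$ from $v=0$ gives the first integral $y(v)=-\tfrac1\eps\int_0^v f(s)\,ds$, which increases on $(0,\alpha)$ to the maximal value $\tfrac1\eps\int_0^1 f^-(s)\,ds\ge 1$; thus $y$ meets the level $1$ at the unique $v_*\in(0,\alpha]$ with $\int_0^{v_*}f^-(s)\,ds=\eps$, where the profile has a vertical tangent at some finite $z_*$. There the profile jumps, at saturated flux, to the unique $v^*\in(\alpha,1)$ determined by $\int_{v^*}^1 f(s)\,ds=\eps$ --- which exists since $\int_\alpha^1 f>\int_0^1 f^-\ge\eps$ --- and on $(v^*,1)$ the branch $y(v)=\tfrac1\eps\int_v^1 f(s)\,ds$ decreases monotonically from $1$ to $0$, completing the heteroclinic at the equilibrium $1$. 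Because each smooth branch is pinned by the first integral and the jump abscissa $v_*$, hence $v^*$, is forced, this discontinuous steady state is unique up to translation; by the previous step it is the only heteroclinic front connecting $0$ and $1$. The main obstacle is precisely the second step: making rigorous the passage through the singularity, i.e. fixing the admissible notion of discontinuous solution (flux saturation across the jump, cf. \cite{GarSan, GarStr}) so that the return obstruction --- equivalently the Rankine--Hugoniot balance --- actually forces $c=0$. Once this is settled, existence and uniqueness in the stationary regime follow routinely from the explicit first integral.
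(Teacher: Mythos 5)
Your argument is correct, but it follows a genuinely different and more self-contained route than the paper, whose proof is essentially two citations: existence of the discontinuous steady state is imported from \cite[Proposition 5.2]{GarSan}, and the exclusion of nonzero speeds rests on the remark (justified via the energy identity in the proof of Theorem \ref{convinv}: multiplying \eqref{TW} by $v'$ and integrating gives $\Vert v'\Vert^2_{L^2(\mathbb{R})}=(F(1)-F(0))/c<+\infty$, so $v\in H^1_{\textnormal{loc}}$ and cannot jump) that a traveling wave with $c\neq 0$ is necessarily continuous. You instead rule out $c\neq 0$ by a sign analysis of the reduced field at the singular level $y=1$ --- equivalently by Rankine--Hugoniot matching of the saturated flux --- and you reconstruct the steady state explicitly from the first integrals $F^-/\eps$ and $F^+/\eps$, which is precisely the computation the paper postpones to the proof of Theorem \ref{convAE}. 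Your sign obstruction is the same mechanism the paper exploits in Theorem \ref{nonomon} (a branch can only reach $\{y=1\}$ with slope $+\infty$ when $b_\eps>0$), and it buys independence from the $L^2$ estimate; the paper's energy argument buys brevity and sidesteps the need to fix the weak ($BV_{\textnormal{loc}}$) notion of solution across the jump, which you rightly flag as the delicate point. One presentational slip worth fixing: in the case $b_\eps>0$ you speak of a single solution that ``must return to $y=0$'', but the two smooth pieces of a discontinuous profile are distinct solutions of \eqref{bb}; the clean statement is that the right branch, shot backward from $(1,0)$, would have to attain the level $y=1$ with slope $-\infty$, which the equation forbids for $b_\eps>0$, so the two branches cannot both saturate and no admissible jump can be formed. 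With that rephrasing, and the (routine) verification that the left branch reaches $v_*$ at a finite abscissa $z_*$ since $dz/dv=(1-y)/\sqrt{y(2-y)}$ stays bounded near $y=1$, your proof is complete.
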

\begin{proof}
Since $\int_0^1 f(s) \, ds > 0$, the statement follows from \cite[Proposition 5.2]{GarSan}, after having noticed that planar traveling wave-type solutions for \eqref{laPDE} having nonzero speed cannot be discontinuous 
(see \cite{KurRos} and the proof of Theorem \ref{convinv} below). 
\end{proof}
As for the statement of Proposition \ref{dss}, we mean the solution in weak-$BV_{\textnormal{loc}}$ sense, as in \cite[Example 1.1 and Definition 1.1]{BonObeOma}. Here, in order to recover uniqueness, we impose that the (only) discontinuity of the solution occurs in correspondence of $z=0$. We also arbitrarily continue to set $v(0)=1/2$, even if $v$ is therein not defined. Notice that if it were $\int_0^1 f(s) \, ds = 0$, then for $\eps > \int_0^1 f^-(s) \, ds$ the heteroclinic connection between $0$ and $1$ would be a regular front with zero speed; for $\eps = \int_0^1 f^-(s) \, ds$, such a profile would have infinite derivative when taking the value $1/2$, namely for $z=0$ (though remaining continuous therein; such a kind of solution was called \emph{border steady state} in \cite{GarSan}). However, the results of Section \ref{sez3} in terms of convergence for $\eps \to 0$ would hold all the same also in this case. The $BV$-setting is actually the most natural for the study of problems ruled by the curvature operator (see, e.g., \cite{BonObe}), especially on bounded intervals, and the transition between regular and $BV$-solutions is a quite typical phenomenon in this kind of quasilinear problems, see for instance \cite{ BonObeOma, BurGri, LopOmaRiv}. 
\begin{remark}\label{singolare}
\textnormal{Proposition \ref{dss} highlights a phenomenon which is peculiar of strongly saturating diffusions: indeed, it is condition \eqref{diff} which ensures that $R$ appearing in \eqref{ilsistemag} possesses a singularity, so that the domain of the right-hand side of the differential equation in \eqref{ilsistemag} cannot be the whole half-plane $y > 0$. The existence of discontinuous steady states thus arises and in the first-order model such solutions are simply found whenever both the (respectively, forward and backward) solutions of 
$$
\left\{
\begin{array}{l}
\displaystyle y'= b_\eps R(y) - g_\eps(v) \\
y(q_1)=0
\end{array}
\right.
\quad 
\textrm{ and }
\quad 
\left\{
\begin{array}{l}
\displaystyle y'= b_\eps R(y) - g_\eps(v) \\
y(q_2)=0
\end{array}
\right.
$$
blow up to the boundary $\{y=1\}$. 
Among the general saturating diffusions entering our setting, one could consider for instance
\begin{equation}\label{equazione2}
P(s)=\frac{s^m}{\sqrt{1+\delta s^{2m}}}, 
\end{equation}
for fixed $m > 1$ and $\delta > 0$. 
Here the singularity for $R$ occurs in correspondence of 
$y=\int_0^{+\infty} sP'(s) \, ds < +\infty$,
as observed, e.g., in \cite[Remark 2.1]{GarStr}. 
Similarly, the arguments in the present paper can be adapted to the case of a density-dependent diffusion of the kind $\textnormal{div}\,(\varphi(\vert \nabla D(u) \vert) \nabla D(u))$, with $D$ a strictly increasing function, as studied in \cite{GarStr}.
On the other hand, if \eqref{weaksat} holds then $R$ possesses no singularities and it is possible to find a regular front for any $\eps > 0$ by simply mimicking the technique used in the linear case \cite{BonSan} (exploiting the uniqueness for the backward Cauchy problem centered in $q_2$, here holding thanks to the monotonicity properties of $R$).}
\end{remark}

As a notation, henceforth let us set 
\begin{equation}\label{notazione}
\textrm{(PB)}_{b_\eps, q}^+:= \;
\left\{
\begin{array}{l}
\displaystyle y'= b_\eps \frac{\sqrt{y(2-y)}}{1-y} - g_\eps(v) \vspace{0.1cm}\\
y(q)=0, \, v > q
\end{array}
\right.
\quad 
\textrm{(PB)}_{b_\eps, q}^-:= \;
\left\{
\begin{array}{l}
\displaystyle y'= b_\eps \frac{\sqrt{y(2-y)}}{1-y} - g_\eps(v) \vspace{0.1cm}\\
y(q)=0, \, v < q,
\end{array}
\right.
\end{equation}
explicitly highlighting the dependence of the considered problem on the speed $b_\eps$, on the point $q$ where the initial condition is given and on the fact that it is solved forward or backward (through the superscripts $+$ and $-$, respectively). Correspondingly, let us denote the solutions to the problems in \eqref{notazione} as $y_{b_\eps, q}^+$ and $y_{b_\eps, q}^-$, respectively.  
One of the key points to obtain the increasing fronts of Proposition \ref{monotoni} is to study the qualitative properties of $y_{b_\eps, 1}^-$, which always takes values below $1$ due to sign reasons; notice that for the backward Cauchy problem $(\textrm{PB})_{b_\eps, 1}^-$ we have uniqueness thanks to the fact that $R(s)=\sqrt{s(2-s)}/(1-s)$ is increasing. The dependence of $y_{b_\eps, 1}^-(\alpha)$ on $b_\eps$ plays then a central role and allows to conclude the statement. Analogous considerations may be done for decreasing solutions. 

We now extend our considerations to general (possibly nonmonotone) heteroclinic traveling waves. Motivated in particular by a population dynamics perspective, we restrict our interest to planar fronts taking values between $0$ and $1$, otherwise other situations - meaning $f$ as extended by $0$ outside $[0, 1]$ - could in principle be considered; under this assumption, the profile $z \mapsto v(z)$ oscillates infinitely many times around the equilibrium $\alpha$ and will be obtained by gluing several monotone pieces of profile, alternatively increasing and decreasing.
If at $+\infty$ (or $-\infty$) the profile reaches another equilibrium ($0$ or $1$) in a monotone way, then this can be done starting the shooting procedure from such equilibrium, similarly as before. To this end, notice that if $y_{b_\eps, 1}^-$ satisfies $y_{b_\eps, 1}^-(\bar{v})=0$ for some $\bar{v} \in (0, 1)$, by \eqref{cambio1} the corresponding wave profile has a critical point at $z(\bar{v})$, namely $v'(z(\bar{v}))=0$. Observe that we can always assume $\bar{v} \neq \alpha$, since by the uniqueness the equilibrium $\alpha$ is reached in infinite time, see also the second order formulation \eqref{IIo} below. For this reason, it is $(y_{b_\eps, 1}^-)'(\bar{v}) \neq 0$ and thus $v''(z(\bar{v})) \neq 0$, meaning that $z \mapsto v'(z)$ changes sign in $z(\bar{v})$ (i.e., when the profile takes the value $\bar{v}$). Now, the subsequent piece of profile will be \emph{decreasing} in the $z$-variable; in order to determine it as above, 
one has to follow the solution of the \emph{forward} Cauchy problem $(\textrm{PB})_{-b_\eps, \bar{v}}^+$
up to its first zero $\bar{\bar{v}}$; we explicitly underline that the sign of the speed has to be changed when constructing a piece of solution of the first order problem with opposite monotonicity, see \cite[Remark 2.3]{GarSan}.
The entire profile $z \mapsto v(z)$ up to its value $\bar{\bar{v}}$ will then be obtained by juxtaposing the constructed pieces of graphs of $y_{b_\eps, q_2}^-$ and $y_{-b_\eps, \bar{v}}^+$; notice that this gives rise to a regular ($C^2$) wave solution, since the gluing procedure in correspondence of the value $y=0$ preserves the $C^1$-regularity thanks to \eqref{cambio1}; using \eqref{TWgeneral}, one then reaches the $C^2$-regularity. A brief comment about the case when no equilibria are reached monotonically at $\pm \infty$ is postponed after the forthcoming proof.

With this preliminary discussion in mind, we get the following result. 
\begin{theorem}\label{nonomon}
Fix $\eps > 0$. Then, 
\begin{itemize}
\item for every $b_\eps \in (b_\eps^*, b_\eps^+)$, there exist a nonmonotone traveling wave type solution of \eqref{TWW} connecting $\alpha$ and $1$, which is definitively increasing at $+\infty$;
\item for every $b_\eps \in (0, b_\eps^+)$, there exist a nonmonotone traveling wave type solution of \eqref{TWW} connecting $0$ and $\alpha$, which is definitively decreasing at $+\infty$.
\end{itemize}
\end{theorem}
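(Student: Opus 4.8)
The plan is to produce the nonmonotone profiles by the gluing procedure set up before the statement, following a single orbit of \eqref{TWW} that spirals towards $\alpha$ as $z\to-\infty$ while attaching monotonically to the outer equilibrium at $+\infty$. I treat the first item in detail; the second is entirely analogous and, exactly as in the second item of Proposition~\ref{monotoni}, can be reduced to a problem of the same form through the reflection $Y(v)=y(1-v)$, after which the equilibrium $0$ plays the role here played by $1$.

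First I would locate the first turning point. Since at $+\infty$ the profile reaches $1$ in a monotone way, I start the shooting from $v=1$ and consider $y_{b_\eps,1}^-$, the unique solution of $(\mathrm{PB})_{b_\eps,1}^-$ (uniqueness and the bound $y<1$ holding by the monotonicity of $R$ and for sign reasons, as recalled after Proposition~\ref{monotoni}). The crux is the dependence on $b_\eps$ of its first zero $\bar v_1(b_\eps)$: by Proposition~\ref{monotoni} one has $\bar v_1(b_\eps^+)=\alpha$ (the monotone $\alpha$-to-$1$ front) and $\bar v_1(b_\eps^*)=0$ (the monotone $0$-to-$1$ front), while a comparison argument shows that $b_\eps\mapsto\bar v_1(b_\eps)$ is monotone. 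Hence $\bar v_1(b_\eps)\in(0,\alpha)$ for every $b_\eps\in(b_\eps^*,b_\eps^+)$; evaluating \eqref{TWW} where $v'=0$ gives $v''=-g_\eps(\bar v_1)>0$, so $\bar v_1$ is a genuine strict local minimum of the profile, lying strictly between $0$ and $\alpha$. This is exactly where the constraint $b_\eps>b_\eps^*$ enters: it prevents the first excursion from being captured by the equilibrium $0$.

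Next I would iterate the gluing. From $\bar v_1$ I follow the decreasing piece $y_{-b_\eps,\bar v_1}^+$ up to its first zero $\bar{\bar v}_1$, which by the same computation is a strict local maximum with $\bar{\bar v}_1\in(\alpha,1)$; from $\bar{\bar v}_1$ I switch back to $y_{b_\eps,\bar{\bar v}_1}^-$, and so on, obtaining a sequence of alternating turning points $\dots<\bar v_2<\bar v_1<\alpha<\bar{\bar v}_1<\bar{\bar v}_2<\dots$. Because $b_\eps<b_\eps^+=2\sqrt{g_\eps'(\alpha)}$, the linearization of \eqref{TWW} at $\alpha$ has complex conjugate eigenvalues with positive real part, so $\alpha$ is an unstable focus and the profile is forced to oscillate infinitely many times; juxtaposing the pieces yields, as observed before the statement, a $C^2$ orbit which is increasing on its last piece and tends to $1$ as $z\to+\infty$.

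The decisive and most delicate step is to show that the turning points converge to $\alpha$, so that the glued orbit is a true heteroclinic rather than one accumulating on a periodic solution or escaping through the barrier $\{y=1\}$. Here I would use the monotone quantity $E(z)=Q(v'(z))+G_\eps(v(z))$, with $Q$ the bounded primitive of $sP'(s)$ introduced above and $G_\eps'=g_\eps$: multiplying \eqref{TWW} by $v'$ yields $E'(z)=b_\eps\,(v')^2\ge0$, so $E$ is nondecreasing along orbits and strictly increasing off the critical points. As \eqref{TWW} is an autonomous planar system in $(v,v')$ with $P'>0$, a Poincar\'e--Bendixson/LaSalle argument then forces the $\alpha$-limit set (as $z\to-\infty$) to consist of equilibria, ruling out limit cycles; since at every turning point $E=G_\eps(v_{\mathrm{turn}})$ and $G_\eps$ has a strict local minimum at $\alpha$, the decrease of $E$ as $z\to-\infty$ drives the turning values monotonically to that minimum, whence $\bar v_k,\bar{\bar v}_k\to\alpha$. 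The main obstacle lies precisely in this no-escape/convergence analysis: one must simultaneously exclude blow-up to $\{y=1\}$, i.e.\ $v'\to\infty$, using the boundedness of $E$ together with the fact that, by the sign of the speed, the barrier is repelling for each solve direction near $\alpha$, and confine the orbit to $(0,1)$ via the sign structure of $g_\eps$ and the bound $b_\eps>b_\eps^*$; only the interplay of the energy estimate with the focus character of $\alpha$ produces the shrinking amplitudes. For the second item the same scheme applies with $0$ as the terminal equilibrium, and since no equilibrium lies below $0$ to intercept the oscillation, no lower threshold on $b_\eps$ arises and the whole range $(0,b_\eps^+)$ is admissible.
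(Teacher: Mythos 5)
Your overall strategy coincides with the paper's: both proofs build the profile by gluing monotone pieces obtained from alternating backward/forward solves of the first-order problem, starting from the outer equilibrium. Where you genuinely depart is in the convergence of the turning values to $\alpha$: you use the Lyapunov quantity $E=Q(v')+G_\eps(v)$ with $E'=b_\eps (v')^2$ together with a LaSalle-type exclusion of periodic orbits, whereas the paper obtains the monotonicity of the two sequences of turning values directly from the nested supersolution structure (each newly constructed piece lies strictly below the preceding one, which vanishes at the previous turning point). Your mechanism is legitimate and in fact makes explicit why the monotone limits must equal $\alpha$, a point the paper passes over quickly. Note, however, that your displayed ordering $\dots<\bar v_2<\bar v_1<\alpha<\bar{\bar v}_1<\bar{\bar v}_2<\dots$ is backwards: the comparison with the preceding piece forces $\bar v_1<\bar v_2<\dots<\alpha<\dots<\bar{\bar v}_2<\bar{\bar v}_1$, i.e., the amplitudes \emph{shrink} as the construction proceeds toward $z=-\infty$ (backward-in-$z$ spiralling into the unstable focus), and this shrinking is precisely what keeps the orbit inside $(0,1)$.

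There are two genuine gaps. First, the boundedness of $E$ cannot exclude blow-up to $\{y=1\}$: since $Q$ has bounded range $[0,M_0)$ (here $M_0=1$), the estimate $Q(v'(z))\le G_\eps(1)-G_\eps(v(z))\le \eps^{-1}\int_\alpha^1 f(s)\,ds$ becomes vacuous as soon as $\eps\le\int_\alpha^1 f(s)\,ds$ --- this is exactly the saturation phenomenon the paper is about, and the theorem is stated for every $\eps>0$. The exclusion must instead come from the sign of the singular term (for a backward solve with speed $+b_\eps$, or a forward solve with speed $-b_\eps$, reaching $\{y=1\}$ would force $y'$ to diverge with the wrong sign), or equivalently from the fact that each piece stays below the previous one; your parenthetical remark that ``the barrier is repelling for each solve direction'' is the correct reason, but it has to carry the whole weight rather than supplement the energy bound. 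Second, your location of the first turning point interpolates between $\bar v_1(b_\eps^+)=\alpha$ and $\bar v_1(b_\eps^*)=0$, which presupposes the existence of the regular monotone front connecting $0$ and $1$, i.e., $\eps>\int_0^1 f^-(s)\,ds$; for $\eps\le\int_0^1 f^-(s)\,ds$ the paper instead compares $y_{b_\eps,1}^-$ with the solution of $(\textrm{PB})_{b_\eps,0}^+$, which blows up to $\{y=1\}$, to force a zero in $(0,\alpha)$, and your argument does not cover this case. Finally, for the second item the operative reason that no lower threshold on $b_\eps$ appears is not the absence of equilibria below $0$ but the assumption $\int_0^1 f(s)\,ds>0$, which makes $v\mapsto-\int_0^v g_\eps(s)\,ds$ a supersolution for $(\textrm{PB})_{-b_\eps,0}^+$ vanishing inside $(\alpha,1)$, so that the first upward excursion cannot overshoot past $1$.
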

\begin{proof}
The idea is to use the gluing procedure previously described. As for the first item, let us initially assume that $\eps > \int_0^1 f^-(s) \, ds$. In this case, we start from $y_{b_\eps, 1}^-$, 
which cannot blow up to the boundary $\{y=1\}$ due to sign reasons. Indeed, if it did it, its derivative therein should be equal to $-\infty$ (since $y_{b_\eps, 1}^-(1)=0$), while by the differential equation
\begin{equation}\label{bb}
y'= b_\eps \frac{\sqrt{y(2-y)}}{1-y} - g_\eps(v)
\end{equation}
satisfied by $y_{b_\eps, 1}^-$ it can only be equal to $+\infty$, being $b_\eps > b_\eps^* > 0$. Now, the solution of \eqref{bb} fulfilling $y(0)=0=y(1)$, obtained for $b=b_\eps^*$ and corresponding to the monotone front connecting $0$ and $1$ found in Proposition \ref{monotoni}, is a strict supersolution for $\textrm{(PB)}_{b_\eps, 1}^-$, while $0$ is a strict subsolution as long as $v \geq \alpha$. Hence, $y_{b_\eps, 1}^-$ is positive and vanishes at some point $v_1$; since it has to be $(y_{b_\eps, 1}^-)'(v_1) \geq 0$ (remember that $y_{b_\eps, 1}^-$ is positive so far), from \eqref{bb} we deduce that $v_1 < \alpha$, in view of the sign of $g_\eps$. We now construct a decreasing piece of planar wave profile by solving, in the first order formulation, the forward Cauchy problem $\textrm{(PB)}_{-b_\eps, v_1}^+$, as mentioned in \cite[Remark 2.3]{GarSan};
noticing that $y_{b_\eps, 1}^-$ is here a strict supersolution, $y_{-b_\eps, v_1}^+$ is going to vanish in a point $v_2$ which belongs to the interval $(\alpha, 1)$ in view of the sign of $g_\eps$. We now iterate such a procedure, alternatively shooting backward and forward from the zeros of the constructed pieces of solution (the subsequent one would be $v_2$) and using as strict supersolutions the pieces of solutions constructed in the immediately preceding step (the subsequent one would be $y_{-b_\eps, v_1}^+$, which is a strict supersolution for $\textrm{(PB)}_{b_\eps, v_2}^-$). By construction, it is clear that the sequences of zeros $v_{2j}$, $v_{2j+1}$ where the profile changes its monotonicity are monotone (respectively, decreasing and increasing), so they both converge to the equilibrium $\alpha$. 
In case $\eps \leq \int_0^1 f^-(s) \, ds$, the argument would work all the same: indeed, here $y_{b_\eps, 1}^-$ would vanish in a certain $0 < v_1 < \alpha$ because otherwise its graph would cross the one of the solution of 
$\textrm{(PB)}_{b_\eps, 0}^+$,
which blows up to the boundary $\{y=1\}$. By the uniqueness, this would not be possible; notice that the conclusion could be drawn similarly also in case $\int_0^1 f(s) \, ds=0$, for which $b_\eps^* = 0$. 

As for the second item, this time we start with 
$y_{-b_\eps, 0}^+$, for $0 < b_\eps < b_\eps^+$,  
noticing that the blow-up to the barrier $\{y=1\}$ cannot occur since the primitive of $-g_\eps$ taking zero value at $0$ is a supersolution which vanishes in a point of the interval $(\alpha, 1)$, being $\int_0^1 f(s) \, ds > 0$. 
We can thus proceed as for the previous item; since $y_{-b_\eps, 0}^+$ is well defined up to its first zero, it works as a supersolution for the following steps and hence blow-up is not possible at any of the subsequent iterations. The conclusion can then be obtained as before.
\end{proof}
Notice that in this case there could be high multiplicity of solutions; the closer $v(0)$ to $\alpha$ is, the more solutions are obtained. To maintain uniqueness, one should choose $v(0)$ close to $1$ in such a way that $z \mapsto v(z)$ takes the value $v(0)$ only once.
In principle, there could also be the possibility for heteroclinic traveling waves to oscillate around $\alpha$ at $+\infty$ ($-\infty$) and to have inferior and superior limit respectively equal to $0$ and $1$ at $-\infty$ ($+\infty$); in view of Proposition \ref{monotoni}, for positive speeds this could occur only if $b_\eps < b_\eps^*$. Anyway, the point is that a profile of this kind would make infinite oscillations both for $z \to +\infty$ and for $z \to -\infty$, and for this reason the corresponding pieces of solution of the first order problem have to be shot both backward and forward from each of their zeros (with opposite speed, as we have already seen). It is not difficult to see that this forces the corresponding profile $z \mapsto v(z)$ to necessarily take values also outside $[0, 1]$: at some point, the backward solution $y_{b_\eps, \hat{v}}^-$, shot from a suitable $\hat{v}$, would stay above $y_{0, 0}^+$, but then it would take positive value in $0$, thus becoming not admissible for our purposes.

Finally, we observe that for $b=b_\eps=0$ the two branches $y_{b_\eps}^+$ and $y_{b_\eps}^-$ coincide for $\eps > \bar{\eps}=\int_0^1 f^-(s) \, ds$ (here the differential equation in \eqref{ilsistema} becomes $y'=-g_\eps(v)$), so that the solution bounces regularly between $0$ and the value $v_0$ defined by $\int_0^{v_0} f(s)\, ds = 0$; for $\eps = \bar{\eps}$ such a bouncing profile remains continuous but takes infinite derivative in infinitely many points, while for lower values of $\eps$ it disappears and the only possible solution is provided by Proposition \ref{dss} (the fact that $\int_0^1 f(s) \, ds > 0$ prevents instead the appearance of regular steady profiles connecting $0$ and $1$). In view of the remark after Proposition \ref{monotoni}, nonmonotone heteroclinic traveling waves tend to disappear for $\eps \to 0$, since the critical speed for monostable-type fronts converges to $0$ for $\eps \to 0$. 
\begin{remark}
\textnormal{Some of the above conclusions could be reached also by a careful analysis of the equilibria for the second order ODE system equivalent to \eqref{TWW}:
\begin{equation}\label{IIo}
\left\{
\begin{array}{l}
\displaystyle V'=\frac{W}{\sqrt{1-W^2}} \vspace{0.1cm} \\
\displaystyle W'= b_\eps \frac{W}{\sqrt{1-W^2}}  - g_\eps(V). 
\end{array}
\right.
\end{equation}
However, the above direct study of the first order model can be implemented with little difficulty also for general operators and appears simpler than the complete analysis of the second order system. 
}
\end{remark}

\begin{remark}\label{altroc}
\textnormal{Dealing with \eqref{TW}, namely keeping $\eps$ in front of the diffusive term avoiding to divide by $\eps$, the same change of variables as before would yield the first-order two-point problem
\begin{equation}\label{IIeq}
\left\{
\begin{array}{l}
\displaystyle y'= c \frac{\sqrt{y(2\eps-y)}}{\eps-y} - f(v), \vspace{0.1cm}\\
y(0)=0, y(1)=0, \; 0 < y(v) < \eps \textrm{ for } v \in \,(0, 1),
\end{array}
\right.
\end{equation}
where 
$$
y(w)=
\eps\left(1 - \frac{1}{\sqrt{1+v'(z(v))^2}}\right).
$$
The behavior of fronts could here be deduced similarly as above. This alternative way of proceeding may actually be independently helpful: see the proof of Theorem \ref{velconv0} and Section \ref{sez3}.}
\end{remark}

So far, we have reasoned for fixed $\eps > 0$. Let us now highlight two properties on varying of $\eps$ which may be useful. 

\begin{proposition}
For $\eps > \int_0^1 f^-(s) \, ds$, the function $\eps \mapsto c^*_\eps$ is continuous and monotone decreasing.  
\end{proposition}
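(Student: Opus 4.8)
The plan is to read off the dependence on $\eps$ from the fixed-reaction first-order formulation \eqref{IIeq} of Remark \ref{altroc}, in which $\eps$ enters only through the factor $R_\eps(y):=\sqrt{y(2\eps-y)}/(\eps-y)$ multiplying the undivided speed $c$, the reaction $f$ being left untouched; this is exactly the feature that isolates the role of $\eps$. The single computation behind the whole argument is
$$
\partial_\eps R_\eps(y)=\frac{-\eps\,y}{(\eps-y)^2\sqrt{y(2\eps-y)}}<0,\qquad 0<y<\eps,
$$
so that at fixed $c>0$ the right-hand side $cR_\eps(y)-f(v)$ of \eqref{IIeq} is pointwise strictly decreasing in $\eps$; since moreover $\partial_c[cR_\eps(y)-f(v)]=R_\eps(y)>0$ on $\{y>0\}$, the same field is strictly increasing in $c$. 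These two monotonicities are the only structural inputs needed.

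By Proposition \ref{monotoni} (through \cite[Proposition 3.9]{GarSan}) the critical speed $c^*_\eps$ is the unique $c$ for which the forward solution $Y^+(\,\cdot\,;c,\eps)$ of \eqref{IIeq} issuing from $(0,0)$ reaches the equilibrium $(1,0)$. I would encode this as the vanishing of a shooting discrepancy $\Phi(c,\eps):=Y^+(1;c,\eps)$, the solution being continued through the axis so that $\Phi<0$ when $Y^+$ returns to $\{y=0\}$ before $v=1$. The comparison principle applied to the two monotonicities above gives at once $\partial_c\Phi>0$, which re-proves the uniqueness of $c^*_\eps$; and, comparing on the common strip $\{0<y<\eps_1\}$ for $\eps_1<\eps_2$, where $R_{\eps_2}<R_{\eps_1}$, it shows that $Y^+(\,\cdot\,;c,\eps)$, and hence $\Phi(c,\,\cdot\,)$, is strictly monotone in $\eps$ (the symmetric statement holds for the backward solution shot from $(1,0)$). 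Along the level set $\Phi=0$ the implicit function theorem then yields
$$
\frac{d}{d\eps}\,c^*_\eps=-\frac{\partial_\eps\Phi}{\partial_c\Phi},
$$
whose sign is determined by these two comparisons, giving the stated strict monotonicity of $\eps\mapsto c^*_\eps$.

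For continuity I would use that the vector field of \eqref{IIeq} is jointly continuous in $(c,\eps)$ and locally Lipschitz in $y$ on every region $\{0\le y\le \eps-\delta\}$, so that $\Phi$ is jointly continuous; being in addition strictly monotone in $c$ with a unique zero, the standard monotone-bijection argument makes $\eps\mapsto c^*_\eps$ continuous. An equivalent and perhaps cleaner route integrates \eqref{IIeq} over $(0,1)$ against the boundary data $y(0)=y(1)=0$, producing the identity
$$
c^*_\eps\int_0^1 R_\eps(y^*_\eps(v))\,dv=\int_0^1 f(s)\,ds,
$$
which simultaneously exhibits $c^*_\eps>0$ and reduces both the continuity and the direction of monotonicity to the behaviour of the connecting orbit $y^*_\eps$ as a function of $\eps$.

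The delicate point — and the place where the hypothesis $\eps>\int_0^1 f^-(s)\,ds$ is genuinely used — is the singularity of $R_\eps$ at the top of the strip $\{y=\eps\}$: both the comparison and the continuous-dependence estimates are legitimate only as long as $y^*_\eps$ stays uniformly below $\eps$, for otherwise the field ceases to be Lipschitz and $Y^+$ may acquire a vertical tangent before $v=1$. Since $\bar\eps=\int_0^1 f^-$ is precisely the threshold at which the orbit first touches $\{y=\eps\}$ (Proposition \ref{dss}), I would expect a uniform bound $\sup_v y^*_\eps\le \eps-\delta$ on compact subintervals of $(\bar\eps,+\infty)$. Producing this a priori separation from the singular boundary, and with it the uniform validity of the comparison across the moving strip, is the main obstacle; once it is in hand, the elementary sign bookkeeping above closes the argument.
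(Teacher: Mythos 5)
Your route --- the undivided reduction \eqref{IIeq}, the pointwise monotonicity of the field $cR_\eps(y)-f(v)$ in $c$ and in $\eps$, comparison for a shooting map $\Phi$, and continuous dependence away from the singular boundary $\{y=\eps\}$ --- is a fleshed-out version of exactly what the paper invokes (``continuous dependence'' plus ``standard lower and upper solution arguments''), and your insistence on the uniform separation $\max_{[0,1]}y^*_\eps<\eps$ as the precise place where $\eps>\int_0^1 f^-(s)\,ds$ enters is correct and more careful than the original; note that at a fixed $\eps_0>\bar\eps$ this separation is automatic, since $y^*_{\eps_0}$ is continuous on $[0,1]$ with $y^*_{\eps_0}(0)=y^*_{\eps_0}(1)=0$, and it then persists for nearby $\eps$ by continuity. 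The computation $\partial_\eps R_\eps(y)=-\eps y/\bigl((\eps-y)^2\sqrt{y(2\eps-y)}\bigr)<0$ is also right.

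The gap is that you never actually evaluate the sign you appeal to at the end. Do it: since $R_\eps$ decreases in $\eps$, the field decreases in $\eps$, so by comparison the forward solution and hence $\Phi(c,\cdot)$ is \emph{decreasing} in $\eps$, while $\Phi(\cdot,\eps)$ is increasing in $c$; therefore
$$
\frac{d}{d\eps}\,c^*_\eps=-\frac{\partial_\eps\Phi}{\partial_c\Phi}>0,
$$
i.e.\ your argument proves that $\eps\mapsto c^*_\eps$ is monotone \emph{increasing}, the opposite of the literal statement you set out to prove. This is not a slip in your comparisons: the increasing direction is the one forced by your own identity $c^*_\eps\int_0^1R_\eps(y^*_\eps(v))\,dv=\int_0^1 f(s)\,ds$ (the integral equals $\Vert (V^*_\eps)'\Vert_{L^2}^2$, which grows as the front steepens when $\eps$ decreases), it matches the linear-diffusion scaling $c\sim\sqrt{\eps}$ and the explicit formula $c^+_\eps=2\sqrt{\eps f'(\alpha)}$, and it is the only direction compatible with Theorem \ref{velconv0}: if $c^*_\eps$ were decreasing in $\eps$ in the usual sense and $c^*_\eps\to 0$ as $\eps\searrow\bar\eps$, then $c^*_\eps\le 0$ for every $\eps>\bar\eps$, contradicting $c^*_\eps>0$. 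The Proposition's ``decreasing'' can therefore only be read as ``decreasing along the limit process $\eps\searrow\bar\eps$'', which is how it is used in the proof of Theorem \ref{velconv0}. As written, your proof asserts agreement with a sign that your own computation contradicts; you must either carry the bookkeeping through and state the increasing conclusion explicitly (flagging the mismatch with the statement), or exhibit an error in one of the two comparisons --- and there is none.
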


\begin{proof}
The continuity follows from the continuous dependence for the differential equation \eqref{ilsistema} in the case of regular solutions; the monotonicity follows instead from standard lower and upper solution arguments, see for instance \cite{GarSan}. 
\end{proof}

For $\eps$ approaching $\bar{\eps}=\int_0^1 f^-(s) \, ds$, we now show that the speed of the critical front connecting $0$ and $1$ goes to $0$, and thus the occurrence of discontinuous steady states appears naturally justified. 

\begin{theorem}\label{velconv0}
It holds 
$$
\lim_{\eps \to \bar{\eps}} c^*_\eps = 0. 
$$
\end{theorem}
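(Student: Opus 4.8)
The plan is to reduce everything to a single integral identity and then show that a singular integral blows up as $\eps \to \bar{\eps}$. Since the regular critical front connecting $0$ and $1$ exists only for $\eps > \bar{\eps}$ (third item of Proposition \ref{monotoni}), the limit is really one-sided, $\eps \to \bar{\eps}^+$. I would work in the formulation of Remark \ref{altroc}: let $y_\eps$ be the solution of \eqref{IIeq} at the critical speed $c = c^*_\eps$, so that $y_\eps(0) = y_\eps(1) = 0$ and $0 < y_\eps < \eps$ on $(0,1)$. Integrating the differential equation in \eqref{IIeq} over $(0,1)$ (the integrand is bounded for each fixed $\eps$ since $\max y_\eps < \eps$ strictly) and using the boundary conditions makes the left-hand side collapse to $y_\eps(1) - y_\eps(0) = 0$, yielding
$$
c^*_\eps \int_0^1 \frac{\sqrt{y_\eps(2\eps - y_\eps)}}{\eps - y_\eps}\, dv = \int_0^1 f(s)\, ds =: F > 0.
$$
Thus $c^*_\eps = F\big/\int_0^1 \frac{\sqrt{y_\eps(2\eps-y_\eps)}}{\eps - y_\eps}\, dv$, and it suffices to prove that this integral diverges as $\eps \to \bar{\eps}^+$.

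Next I would pin $y_\eps$ close to the singular barrier $\{y = \eps\}$ near $v = \alpha$. Since the speed term $c^*_\eps \sqrt{y(2\eps-y)}/(\eps-y)$ is nonnegative, $y_\eps$ is a supersolution of the $c = 0$ problem $y' = -f(v)$, $y(0)=0$, whose solution is $-\Phi(v)$ with $\Phi(v) := \int_0^v f(s)\, ds$; a one-line comparison (same initial value, larger derivative) gives $y_\eps(v) \geq -\Phi(v)$ on $[0,1]$. Because $\bar{\eps} = \int_0^1 f^-(s)\, ds = -\Phi(\alpha)$, the regularity bound \eqref{ipof} yields $\Phi(v) \leq \Phi(\alpha) + \tfrac{f'(\alpha)}{2}(v-\alpha)^2 = -\bar{\eps} + \tfrac{f'(\alpha)}{2}(v-\alpha)^2$, and hence a quadratic control of the gap to the barrier:
$$
\eps - y_\eps(v) \;\leq\; \eps + \Phi(v) \;\leq\; (\eps - \bar{\eps}) + \frac{f'(\alpha)}{2}(v-\alpha)^2 .
$$

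With $\nu := \eps - \bar{\eps} \to 0^+$, I would then bound the integral from below on a fixed neighbourhood of $\alpha$. On $[\alpha - h, \alpha + h]$ the lower bound $y_\eps \geq -\Phi \geq \bar{\eps} - \tfrac{f'(\alpha)}{2}h^2$ keeps $y_\eps$ uniformly away from $0$ (for $h$ small and $\eps$ close to $\bar{\eps}$), while $2\eps - y_\eps > \eps$; hence the numerator $\sqrt{y_\eps(2\eps-y_\eps)}$ is bounded below by some constant $c_0 > 0$. Combining with the gap estimate and setting $A := f'(\alpha)/2$,
$$
\int_0^1 \frac{\sqrt{y_\eps(2\eps-y_\eps)}}{\eps - y_\eps}\, dv \;\geq\; \int_{\alpha - h}^{\alpha + h} \frac{c_0\, dv}{\nu + A(v-\alpha)^2} \;=\; \frac{2c_0}{\sqrt{A\nu}}\,\arctan\!\left(h\sqrt{A/\nu}\right),
$$
which is of order $\nu^{-1/2}$ and diverges as $\nu \to 0^+$. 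Plugging this into the identity of the first step gives $c^*_\eps \leq C\sqrt{\nu} \to 0$, as claimed.

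The crux — and the step I expect to require the most care — is the interplay in the second paragraph: the comparison keeps $y_\eps$ within an $O(\eps - \bar{\eps})$ gap of the barrier precisely at $v = \alpha$ (where $-\Phi$ attains its maximum $\bar{\eps}$), while \eqref{ipof} forces this gap to grow only quadratically in $v - \alpha$. It is exactly this matching of the threshold $\bar{\eps} = -\Phi(\alpha)$ with the barrier that produces the non-integrable $1/(\nu + A(v-\alpha)^2)$ behaviour and the sharp rate $\nu^{-1/2}$. The remaining verifications — the elementary comparison $y_\eps \geq -\Phi$, the lower bound on the numerator on a fixed neighbourhood, and the one-sidedness of the limit — are routine. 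One could equally run the argument in the formulation \eqref{ilsistema}, with barrier $\{y=1\}$ and gap $1 - y_\eps \leq (\eps-\bar{\eps})/\eps + \tfrac{f'(\alpha)}{2\eps}(v-\alpha)^2$; the conclusion is identical.
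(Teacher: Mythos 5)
Your proof is correct, and it takes a genuinely different route from the paper's. The paper argues by contradiction: invoking the monotonicity of $\eps \mapsto c^*_\eps$ to extract a putative limit $\bar{c}>0$, it uses the same comparison you use ($y_\eps \geq -\Phi$, there called the lower solution $s_\eps$) only to guarantee $y_\eps \geq \bar{\eps}/2$ on $[\alpha_0,\alpha]$, then integrates the ODE of \eqref{IIeq} over $[\alpha_0,\alpha]$ to push $y_\eps(\alpha)$ strictly above $\bar{\eps}+\bar{c}\sqrt{3}\,(\alpha-\alpha_0)$, which exceeds the barrier $\eps$ once $\eps$ is close to $\bar{\eps}$ --- a contradiction with regularity. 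You instead integrate over all of $[0,1]$ to get the exact identity $c^*_\eps\int_0^1 \sqrt{y_\eps(2\eps-y_\eps)}/(\eps-y_\eps)\,dv=\int_0^1 f$ (the first-order avatar of the energy identity \eqref{integrale1}) and then show the singular integral blows up like $(\eps-\bar{\eps})^{-1/2}$, using \eqref{ipof} to control the gap to the barrier quadratically in $v-\alpha$. The trade-off: your argument is direct, does not rely on the monotonicity of $c^*_\eps$ in $\eps$ established in the preceding proposition, and yields the quantitative rate $c^*_\eps=O(\sqrt{\eps-\bar{\eps}})$, which the paper's proof does not provide; the paper's argument is shorter and needs the comparison only on one side of $\alpha$, but gives no rate. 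All the individual steps you flag as routine (the FTC over $[0,1]$, legitimate since $\max y_\eps<\eps$ strictly for a regular front; the comparison $y_\eps\geq-\Phi$; the uniform lower bound on the numerator near $\alpha$) do check out.
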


\begin{proof}
Here it is more convenient to use the first-order model in the form \eqref{IIeq}.
Assume by contradiction that the statement is not true; since $c^*_\eps$ is positive and decreasing as a function of $\eps$, this means that there exists $\bar{c} > 0$ with $c^*_\eps \searrow \bar{c}$ for $\eps \to \bar{\eps}$. Fix $\eps > \bar{\eps}$ and let $\alpha_0$ be defined by $\int_0^{\alpha_0} f^-(s) \, ds = \bar{\eps}/2$. 
Denoting by $y_\eps$ the solution of 
\begin{equation}\label{pyeps}
\left\{
\begin{array}{l}
\displaystyle y'= c^*_\eps \frac{\sqrt{y(2\eps-y)}}{\eps-y} - f(v) \vspace{0.1cm}\\
y(0)=0
\end{array}
\right.
\end{equation}
(unique by virtue of \cite{BonSan}, thanks to the negative sign of $f$ in a neighborhood of $0$) and observing that $s_\eps(v)=\int_0^v -f(s) \, ds$ is a lower solution for \eqref{pyeps} in the interval $[0, \alpha]$, we can write $c^*_\eps= \bar{c}+h(\eps)$ with $h(\eps) > 0$ and infer that 
\begin{align*}
y_\eps'(v) & \geq \bar{c} \frac{\sqrt{y_\eps(v)(2\eps-y_\eps(v))}}{\eps-y_\eps(v)} - f(v)  \\
& \geq \bar{c} \frac{\sqrt{(\bar{\eps}/2)(2\eps-\bar{\eps}/{2})}}{\eps-\bar{\eps}/2} - f(v) \quad \,  \textrm{ since } \; y_\eps(v) \geq s_\eps(v) \geq \frac{\bar{\eps}}{2} \textrm{ for } v \in [\alpha_0, \alpha].
\end{align*}
Integrating between $\alpha_0$ and $\alpha$ this last relation yields 
$$
y_\eps(\alpha) - y_\eps(\alpha_0) \geq \bar{c} \frac{\sqrt{(\bar{\eps}/2)(2\eps-\bar{\eps}/{2})}}{\eps-\bar{\eps}/2} (\alpha-\alpha_0) + \frac{\bar{\eps}}{2}, 
$$
giving 
$$
y_\eps(\alpha) \geq \bar{\eps} + \bar{c} \frac{\sqrt{(\bar{\eps}/2)(2\eps-\bar{\eps}/{2})}}{\eps-\bar{\eps}/2} (\alpha-\alpha_0). 
$$
For $\eps \searrow \bar{\eps}$, the second summand in the right-hand side of the above inequality is a constant which goes to $\bar{c}\sqrt{3}$, while at the same time it has to be $y_\eps(\alpha) < \eps$, otherwise $y_\eps$ would not correspond to a regular front. This is a contradiction for $\eps$ sufficiently close to $\bar{\eps}$. 
\end{proof}

\section{The limit for $\eps \to 0$}\label{sez3}

We are here interested in some convergence properties of the traveling fronts when passing to the limit for $\eps \to 0$. Here it appears convenient to stick to the change of variables mentioned in Remark \ref{altroc}, namely we do not divide by $\eps$ the original equation. 
We will analyze the convergence from two different points of view, that is, fixing the speed (in case it is admissible for every small $\eps$) or focusing on the critical speeds and examining the convergence of the associated fronts. For simplicity, we will perform the study only in the case \eqref{lascelta}, briefly remarking the changes to be done for a general strongly saturating diffusion.

\subsection{Convergence at fixed speed}

In the first case, inspired by \cite{HilKim}, we recall that an \emph{inviscid traveling wave} is a planar traveling wave-type solution of the problem without diffusion, that is, $u(x, t)=v(x \cdot e +ct)$ solving 
\begin{equation}\label{inviscido}
u_t=f(u) \quad \Rightarrow \quad cv'=f(v).
\end{equation}
As mentioned in \cite[Remark 2]{HilKim}, for the bistable case no fronts of this type connecting $0$ and $1$ appear, while there always exists an inviscid traveling front connecting $\alpha$ and $1$ (and, symmetrically, connecting $0$ and $\alpha$). Such a front can be recovered simply by solving 
the ODE in \eqref{inviscido} by separation of variables, noticing that the integral $\int_{\alpha}^1 dv/f(v)$ diverges since $f'(\alpha) > 0$ by assumption.
It can thus be proved that for every speed $c \geq 0$ there exists an inviscid traveling front $\mathcal{V}_c$ connecting $\alpha$ and $1$ with speed $c$ (this is independent of $\eps$, since $\eps$ does not appear in equation \eqref{inviscido}). On the other hand, the fact that $c^+_\eps \to 0$ for $\eps \to 0$ implies that for fixed speed $c > 0$ it suffices to take $\eps$ sufficiently small in order to find a regular front connecting $\alpha$ and $1$ having speed $c$; we denote such a front by $v_{\eps, c}$. We assume without loss of generality that $v_{\eps, c}(0)=(\alpha+1)/2$ for every $\eps >0$. We now have the following.  
\begin{theorem}\label{convinv}
Let $c > 0$ be fixed. Then, $v_{\eps, c} \to \mathcal{V}_c$ for $\eps \to 0$. 
\end{theorem}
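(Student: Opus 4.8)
The plan is to combine a uniform (in $\eps$) slope estimate with a soft compactness-and-identification argument, reading \eqref{TW} as a vanishing-viscosity regularization of the inviscid equation \eqref{inviscido}.

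First I would establish a uniform Lipschitz bound on the fronts. Each $v_{\eps, c}$ is a regular ($C^2$) increasing front connecting the equilibria $\alpha$ and $1$, so by the boundary conditions of the first-order reduction \eqref{IIeq} one has $v_{\eps, c}'(\pm\infty)=0$; since $v_{\eps, c}'>0$ in between, the positive function $v_{\eps, c}'$ attains its maximum at some finite $z_0$, where $v_{\eps, c}''(z_0)=0$. Evaluating \eqref{TW} at $z_0$ kills the diffusive term (because $(P(v'))'=P'(v')v''$) and leaves $c\,v_{\eps, c}'(z_0)=f(v_{\eps, c}(z_0))$, whence $\max_z v_{\eps, c}'\le \|f\|_{L^\infty([\alpha, 1])}/c$, uniformly in $\eps$. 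This is the crucial estimate: it keeps the slopes bounded away from the singular barrier of $R$ and thus prevents exactly the formation of discontinuities that does occur for bistable-type fronts.

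Next I would pass to the limit. The uniform bound makes $\{v_{\eps, c}\}$ equi-Lipschitz, and together with monotonicity, the confinement $\alpha\le v_{\eps, c}\le 1$, and the normalization $v_{\eps, c}(0)=(\alpha+1)/2$, the Arzel\`a--Ascoli theorem yields, along any sequence $\eps_n\to 0$, a subsequence converging locally uniformly on $\mathbb{R}$ to a nondecreasing Lipschitz limit $v_*$ with $v_*(0)=(\alpha+1)/2$. To identify the equation satisfied by $v_*$, I would use the weak form of \eqref{TW}: for $\psi\in C_c^\infty(\mathbb{R})$, $-\eps\int P(v_{\eps, c}')\psi'=\int (c\,v_{\eps, c}'-f(v_{\eps, c}))\psi$. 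Since $|P|\le 1$, the left-hand side is $O(\eps)$; on the right, $v_{\eps, c}'\rightharpoonup v_*'$ weakly-$*$ in $L^\infty_{\mathrm{loc}}$ and $f(v_{\eps, c})\to f(v_*)$ locally uniformly, so the limit satisfies $c\,v_*'=f(v_*)$ in the distributional (hence classical, by regularity) sense, i.e. \eqref{inviscido}.

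It remains to identify $v_*$ with $\mathcal{V}_c$, and this is the step I expect to be the most delicate. Since $f>0$ on $(\alpha, 1)$, any solution of $c\,v'=f(v)$ is strictly increasing while it takes values in $(\alpha, 1)$, so $v_*$ is strictly monotone there and, being trapped in $[\alpha, 1]$, its limits at $\pm\infty$ must be zeros of $f$, forcing $v_*(-\infty)=\alpha$ and $v_*(+\infty)=1$ (a stall at an interior non-equilibrium is impossible, and the divergence of $\int_\alpha c/f$ and $\int^1 c/f$ near the endpoints, guaranteed by $f'(\alpha)>0$ and the linear control near $1$, matches the infinite transition time of the inviscid front). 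The normalization $v_*(0)=(\alpha+1)/2$ survives the uniform convergence, so by uniqueness for \eqref{inviscido} with this datum we get $v_*=\mathcal{V}_c$; as the limit is independent of the sequence, the whole family converges, $v_{\eps, c}\to\mathcal{V}_c$. The subtlety here, and the reason a naive singular-perturbation super/subsolution sandwich around the outer solution $R^{-1}(f(v)/c)$ is awkward, is that this outer manifold is repelling for the rescaled first-order problem $\eps x'=c\,R(x)-f(v)$ (indeed $\partial_x(c\,R(x)-f(v))=c\,R'(x)>0$); the slope bound of the first step is precisely what lets the soft argument bypass this instability by ruling out degeneration of the limit.
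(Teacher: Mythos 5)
Your proof is correct, and it reaches the same identification step as the paper (testing \eqref{TW} against $\psi\in C_c^\infty(\mathbb{R})$, letting the $O(\eps)$ diffusive term vanish, and concluding $cv_*'=f(v_*)$ by uniqueness of the normalized inviscid front), but the compactness mechanism you use is genuinely different. The paper multiplies \eqref{TW} by $v_{\eps,c}'$ and integrates over $\mathbb{R}$: the diffusive term is an exact derivative and drops out, yielding the energy identity $\Vert v_{\eps,c}'\Vert_{L^2(\mathbb{R})}^2=(F(1)-F(\alpha))/c$, hence an $\eps$-uniform $H^1_{\textnormal{loc}}$ bound and local uniform convergence via the compact Sobolev embedding; uniform convergence on all of $\mathbb{R}$ is then upgraded through Diekmann's lemma. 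You instead evaluate the equation at the interior maximum of $v_{\eps,c}'$ (where $v_{\eps,c}''=0$ kills the quasilinear term) to get the pointwise bound $\Vert v_{\eps,c}'\Vert_{L^\infty}\le \Vert f\Vert_{L^\infty([\alpha,1])}/c$ and apply Arzel\`a--Ascoli. Your slope bound is more elementary and strictly stronger (equi-Lipschitz rather than equi-H\"older-$1/2$), and it carries over verbatim to the general diffusion of \textbf{(H1)} since $(P(v'))'=P'(v')v''$ always vanishes at a critical point of $v'$ -- whereas the paper's energy argument needs the splitting of the domain into $\{v_{\eps,c}''\gtrless 0\}$ described in the remark after the theorem. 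What the paper's route buys is the quantitative $L^2$ identity (used elsewhere to show that waves with nonzero speed cannot be discontinuous) and the explicit uniform convergence on the whole line; your argument as written only delivers locally uniform convergence, which suffices for the statement but could be upgraded by the same Diekmann lemma if desired. Your closing identification of the limits at $\pm\infty$ and the remark on the repelling slow manifold are sound; the only cosmetic slip is the reference to \eqref{IIeq} (the $0$--$1$ reduction) where you mean the analogous two-point problem on $[\alpha,1]$.
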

\begin{proof}
First, we observe that since $\alpha \leq v_{\eps, c}(z) \leq 1$ for every $z \in \mathbb{R}$, the set $\{v_{\eps, c}\}_\eps$ is a bounded subset of $L^\infty(\mathbb{R})$; by using a diagonal procedure, then, we have the existence of a function $v_c$ such that $v_{\eps, c}(z) \to v_c(z)$ almost everywhere in $\mathbb{R}$. 
Moreover, multiplying \eqref{TW} by $v_{\eps, c}'$ and integrating on $\mathbb{R}$ yields 
\begin{equation}\label{integrale1}
\eps \int_{\mathbb{R}} \frac{v_{\eps, c}''(z)}{\sqrt{1+v_{\eps, c}'(z)^2}^3} v_{\eps, c}'(z) \, dz - c \int_{\mathbb{R}} v_{\eps, c}'(z)^2 \, dz + \int_{\mathbb{R}} f(v_{\eps, c}(z)) v_{\eps, c}'(z) \, dz = 0  
\end{equation}
and since 
$$
\int_{\mathbb{R}} \frac{v_{\eps, c}''(z)}{\sqrt{1+v_{\eps, c}'(z)^2}^3} v_{\eps, c}'(z) \, dz = -\frac{1}{\sqrt{1+v_{\eps, c}'(z)^2}} \Bigg\vert_{-\infty}^{+\infty} = 0,
$$
we infer that 
$$
\Vert v_{\eps, c}' \Vert_{L^2(\mathbb{R})} = \frac{F(1)-F(\alpha)}{c},
$$
where $F$ is a primitive of $f$. 
Thus $v_{\eps, c} \in H^1_{\textnormal{loc}}(\mathbb{R})$; by the compact Sobolev embedding into continuous functions, 
we have that $v_{\eps, c} \to v_c$ locally uniformly. Notice that $v_c$ is H\"older continuous (again by the Sobolev embeddings) and is increasing since $v_{\eps, c}$ is increasing for every $\eps$. Using \cite[Lemma 2.4]{Diek}, we can now deduce that $v_{\eps, c} \to v_c$ uniformly in $\mathbb{R}$, since $v_{\eps, c}$ is increasing, $v_c$ is continuous and there is pointwise convergence of $v_{\eps, c}$ to $v_c$ also for $z \to \pm\infty$. 
Multiplying \eqref{TW} by a test function $\psi \in C_c^\infty(\mathbb{R})$ and integrating by parts, one now has 
$$
-\eps \int_{\mathbb{R}} \frac{v_{\eps, c}'(z)}{\sqrt{1+(v_{\eps, c}'(z))^2}} \psi'(z) \, dz + c \int_{\mathbb{R}} v_{\eps, c}(z) \psi'(z) \, dz + \int_{\mathbb{R}} f(v_{\eps, c}(z)) \psi(z) \, dz =0.
$$
Since $\frac{v_{\eps, c}'(z)}{\sqrt{1+(v_{\eps, c}'(z))^2}} \leq 1$ and $\alpha \leq v_{\eps, c}(z) \leq 1$, the Lebesgue dominated convergence theorem ensures that 
$$
-\eps \int_{\mathbb{R}} \frac{v_c'(z)}{\sqrt{1+(v_c'(z))^2}} \psi'(z) \, dz + c\int_{\mathbb{R}} v_c(z) \psi'(z) \, dz  + \int_{\mathbb{R}} f(v_c(z)) \psi(z) \, dz = 0.
$$
For $\eps \to 0$, this says that $v_c$ is a weak solution of 
$
cv_c' - f(v_c) = 0.
$
We thus conclude similarly as in \cite[Theorem 1]{HilKim}: 
since $v_c$ is H\"older continuous and cannot be constant in view of the fact that $v_{c}(0)=(\alpha+1)/2$, by the uniqueness one has $\alpha < v_c < 1$, so that $v_c' > 0$ and thus $v_c$ coincides with the inviscid front of speed $c$.  
\end{proof}

\begin{remark}
\textnormal{In case of a general strongly saturating diffusion one can proceed analogously, splitting the integration domain in the first integral appearing in \eqref{integrale1} into the two domains $\{v''_{\eps, c} \gtrless 0\}$, and exploiting \eqref{diff} to infer that the two obtained integrals compensate one for the other. Thus, the first term in \eqref{integrale1} disappears and the rest of the argument works in the same way. 
The first part of the above proof also shows that a traveling wave having nonzero speed has necessarily to be continuous, since its derivative belongs to $L^2$ (see also \cite{KurRos}).}
\end{remark}

\subsection{Convergence of the critical fronts connecting $\alpha$ and $1$}

As for the critical fronts connecting $\alpha$ and $1$, which under our hypotheses appear for $c_\eps^+=2\sqrt{\eps}\sqrt{f'(\alpha)}$, we denote them by $V^+_\eps=v_{\eps, c^+_\eps}$. We also set 
$$
H_\alpha(z)=\left\{
\begin{array}{ll}
\alpha & z < 0 \\
(\alpha+1)/2 & z = 0 \\ 
1 & z > 0.
\end{array}
\right.
$$
Then, 
we have the following.  
\begin{theorem}\label{conv2}
For every $z \in \mathbb{R}$, it holds 
$$
V_\eps^+(z) \to H_\alpha(z),
$$
where the convergence is uniform in $\mathbb{R} \setminus \mathcal{I}_0$, $\mathcal{I}_0$ being an arbitrary neighborhood of the origin. Moreover, 
$$
(V_\eps^+)' \to (1-\alpha) \delta_0 \quad \textrm{ in } \mathcal{D}'(\mathbb{R}),
$$
where $\delta_0$ denotes the Dirac delta distribution concentrated at $0$. 
\end{theorem}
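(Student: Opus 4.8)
The plan is to follow the strategy of the proof of Theorem \ref{convinv}, but with the essential twist that the compactness mechanism used there is no longer available: multiplying \eqref{TW} by $(V_\eps^+)'$ and integrating as in that proof gives $\|(V_\eps^+)'\|_{L^2(\mathbb{R})}^2 = (F(1)-F(\alpha))/c_\eps^+ \to +\infty$, since now $c_\eps^+ = 2\sqrt{\eps f'(\alpha)} \to 0$. I would therefore replace the $H^1$-compactness by \emph{monotonicity}. First I record the uniform bounds: each $V_\eps^+$ is increasing with $\alpha \leq V_\eps^+ \leq 1$, so $\{V_\eps^+\}_\eps$ is bounded in $L^\infty(\mathbb{R})$ and has total variation exactly $1-\alpha$. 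By Helly's selection theorem (with a diagonal argument over the intervals $[-n,n]$) I extract from any sequence $\eps_k \to 0$ a subsequence along which $V_{\eps_k}^+ \to v_0$ pointwise on $\mathbb{R}$, with $v_0$ nondecreasing and $\alpha \leq v_0 \leq 1$.

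Next I identify $v_0$. Testing \eqref{TW} against $\psi \in C_c^\infty(\mathbb{R})$ and integrating by parts gives, exactly as in the proof of Theorem \ref{convinv},
\[
-\eps \int_{\mathbb{R}} \frac{(V_\eps^+)'}{\sqrt{1+((V_\eps^+)')^2}} \psi' \, dz + c_\eps^+ \int_{\mathbb{R}} V_\eps^+ \psi' \, dz + \int_{\mathbb{R}} f(V_\eps^+) \psi \, dz = 0.
\]
The key point is that \emph{both} the first and the second term now vanish in the limit: the first because $|(V_\eps^+)'/\sqrt{1+((V_\eps^+)')^2}| \leq 1$, so the integral is bounded by $\|\psi'\|_{L^1}$ while the prefactor $\eps \to 0$; the second because $|V_\eps^+| \leq 1$ and $c_\eps^+ \to 0$. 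By dominated convergence the third term tends to $\int_{\mathbb{R}} f(v_0)\psi \, dz$, so $v_0$ satisfies $f(v_0)=0$ almost everywhere. Since $f$ vanishes in $[\alpha,1]$ only at $\alpha$ and $1$, this forces $v_0 \in \{\alpha,1\}$ a.e., and together with monotonicity $v_0$ must be a single jump from $\alpha$ to $1$ at some $z_0$. To locate $z_0$ I use the normalization $V_\eps^+(0)=(\alpha+1)/2$: for $z<0$, monotonicity gives $V_\eps^+(z)\le(\alpha+1)/2$, hence $v_0(z)\le(\alpha+1)/2$ and so $v_0(z)=\alpha$; for $z>0$ one gets symmetrically $v_0(z)=1$. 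Thus $z_0=0$ and $v_0=H_\alpha$ a.e.; as the limit is independent of the subsequence, the whole family converges a.e.\ to $H_\alpha$.

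Finally I upgrade the convergence. For every $z>0$, choosing a point $z'\in(0,z)$ of a.e.-convergence, monotonicity gives $V_\eps^+(z)\ge V_\eps^+(z')\to 1$, whence $V_\eps^+(z)\to 1=H_\alpha(z)$; symmetrically for $z<0$, while at $z=0$ the value is exactly $(\alpha+1)/2=H_\alpha(0)$. Uniform convergence on $\mathbb{R}\setminus\mathcal{I}_0$ then follows directly from monotonicity, without any Dini-type lemma: fixing $\delta>0$ with $(-\delta,\delta)\subseteq\mathcal{I}_0$, for $z\ge\delta$ one has $0\le 1-V_\eps^+(z)\le 1-V_\eps^+(\delta)\to 0$, and for $z\le-\delta$ one has $0\le V_\eps^+(z)-\alpha\le V_\eps^+(-\delta)-\alpha\to 0$. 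For the distributional statement, $V_\eps^+\to H_\alpha$ in $L^1_{\textnormal{loc}}(\mathbb{R})$ by dominated convergence, hence in $\mathcal{D}'(\mathbb{R})$; since differentiation is continuous in $\mathcal{D}'(\mathbb{R})$, this yields $(V_\eps^+)'\to H_\alpha'=(1-\alpha)\delta_0$, the jump of $H_\alpha$ at the origin being $1-\alpha$.

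The main obstacle is conceptual rather than computational: because $\|(V_\eps^+)'\|_{L^2}$ blows up, one cannot reuse the Sobolev argument of Theorem \ref{convinv}, and both the compactness and the identification of the limit must be obtained from monotonicity together with the saturation bound $|P((V_\eps^+)')|\le 1$. The delicate steps are then ensuring the diffusion term really disappears (rather than contributing a measure) and pinning the jump at the origin from the normalization alone, since the limit $H_\alpha$ is itself discontinuous and carries no continuity one could exploit there.
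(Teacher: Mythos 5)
Your proposal is correct and follows essentially the same route as the paper's own proof: test \eqref{TW} against $\psi\in C_c^\infty(\mathbb{R})$, observe that both the diffusion and the advection terms vanish in the limit (the former since $\vert P((V_\eps^+)')\vert\leq 1$ and $\eps\to 0$, the latter since $c_\eps^+=2\sqrt{\eps f'(\alpha)}\to 0$), deduce $f(V^+)=0$ almost everywhere, and pin the jump at the origin through monotonicity and the normalization $V_\eps^+(0)=(\alpha+1)/2$; the distributional statement is likewise obtained from $L^1_{\textnormal{loc}}$ convergence. The only differences are technical and both to your credit: you invoke Helly's selection theorem explicitly where the paper appeals to a diagonal procedure (correctly noting that the $H^1_{\textnormal{loc}}$ compactness of Theorem \ref{convinv} is unavailable here because $\Vert (V_\eps^+)'\Vert_{L^2}$ blows up), and you derive the uniform convergence on $\mathbb{R}\setminus\mathcal{I}_0$ directly from monotonicity of $V_\eps^+$ rather than via the Dini-type lemma of Diekmann cited in the paper.
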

\begin{proof}
Recalling that $\alpha \leq V_\eps^+(z) \leq 1$ for every $z \in \mathbb{R}$, using a diagonal procedure as in the proof of Theorem \ref{convinv} we have that there exists $V^+$ for which $V_\eps^+ \to V^+$ almost everywhere. 
Multiplying \eqref{TW} by $\psi \in C_c^\infty(\mathbb{R})$ and integrating by parts, we then infer that 
$$
-\eps \int_{\mathbb{R}} \frac{(V_\eps^+)'(z)}{\sqrt{1+((V_\eps^+)'(z))^2}} \psi'(z) \, dz +2\sqrt{\eps} \sqrt{f'(\alpha)} \int_{\mathbb{R}} V_\eps^+(z) \psi'(z) \, dz + \int_{\mathbb{R}} f(V_\eps^+(z)) \psi(z) \, dz = 0. 
$$
Passing to the limit for $\eps \to 0$, we observe that the first two summands converge to $0$ because $\frac{(V_\eps^+)'(z)}{\sqrt{1+((V_\eps^+)'(z))^2}} \leq 1$ and $\alpha \leq V_\eps^+(z) \leq 1$ for every $z \in \mathbb{R}$. It follows that 
$$
\lim_{\eps \to 0} \int_{\mathbb{R}} f(V_\eps^+(z)) \psi(z) \, dz = 0,
$$
for every $\psi \in C_c^\infty(\mathbb{R})$. 
Since $f$ is bounded and $f(V_\eps^+(z)) \to f(V^+(z))$ for almost every $z \in \mathbb{R}$, we deduce that $f(V^+(z)) = 0$ for almost every $z \in \mathbb{R}$. 
Using the monotonicity of $V_\eps^+$ and the fact that $V^+(0)=\frac{\alpha+1}{2}$, we deduce that 
$$
V^+(z) = \alpha \textrm{ for almost every } z < 0, \qquad V^+(z) = 1 \textrm{ for almost every } z > 0. 
$$
The full pointwise convergence follows now from the fact that $z \mapsto V_\eps^+(z)$ is monotone for every $\eps$, together with the comparison theorem for limits. The uniform convergence outside a neighborhood of $0$ follows instead from \cite[Lemma 2.4]{Diek}, similarly as in the proof of Theorem \ref{convinv} (notice that the limit function has to be continuous in order to apply such a lemma). 
It remains to prove that $(V_\eps^+)' \to (1-\alpha) \delta_0$ in distributional sense, namely 
$$
-\lim_{\eps \to 0} \langle V_\eps^+, \psi' \rangle = (1-\alpha) \psi(0) \quad \textrm{ for every } \psi \in C_c^{\infty}(\mathbb{R}),
$$
where the first duality can be meant in integral sense since $V_\eps^+ \in L^1_{\textnormal{loc}}(\mathbb{R})$. 
Using the Lebesgue dominated convergence theorem we now have 
$$
-\lim_{\eps \to 0} \int_\mathbb{R} V_\eps^+(z) \psi'(z) \, dz = -\int_{\textnormal{supp}\, \psi} H_\alpha(z) \psi'(z) =(1-\alpha) \psi(0),
$$ 
whence the thesis.
\end{proof}
In case of a general strongly saturating diffusion like the ones considered in \eqref{laPDE}, the fact that $P$ is bounded ensures that the argument in the previous proof works all the same. 

In Figure \ref{figura}, we illustrate Theorem \ref{conv2} for $P$ as in \eqref{lascelta},
\begin{equation}\label{sceltaf}
f(s)=s(1-s)(s-0.4)
\end{equation}
and different values of $\eps$: it is quite immediate to see that, the smaller $\eps$, the steeper the corresponding traveling front (on the left). On the right, we depict the shape of the corresponding solution of \eqref{ilsistema} (with $q_1=\alpha$, $q_2=1$). Notice that \eqref{ilsistema} provides a very useful way of determining the initial value of the derivative for the traveling front: solving for $v'(z(v))$ in \eqref{cambio1}, in correspondence of the point $v=v_\eps(0)=(\alpha+1)/2$, we can find $v_\eps'(0)$ and thus shoot the solution properly in our simulations. Due to the high sensitivity of the numerical response to small variations of the initial data and to the objective difficulty of finding a solution defined on the whole real line, we here notice another advantage of the above change of variables. 

\begin{figure}[ht!]
\includegraphics[scale=0.49]{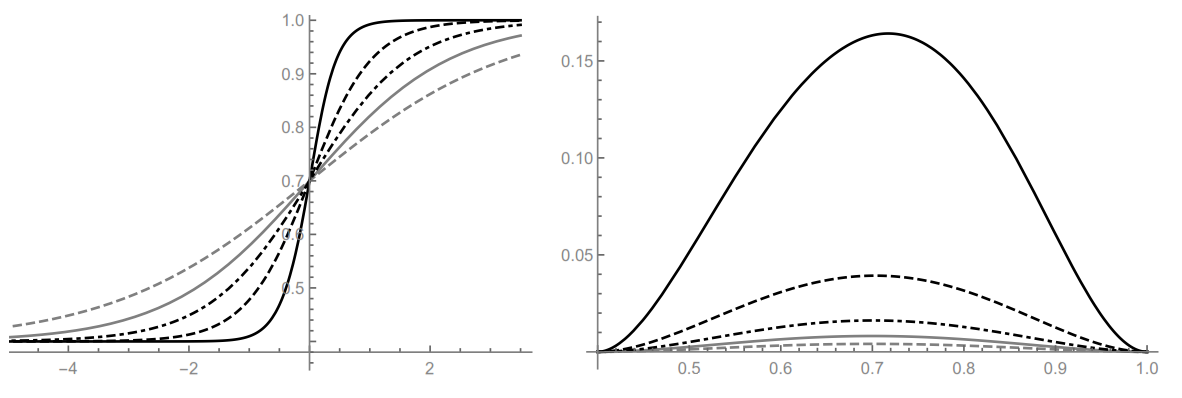}
\caption{For $f$ as in \eqref{sceltaf} and $\eps=0.5$ (gray, dashed), $\eps=0.25$ (gray), $\eps=0.125$ (black, dot-dashed), $\eps=0.05$ (black, dashed), $\eps=0.01$ (black), we depict: on the left, the critical traveling fronts connecting $\alpha$ and $1$ and solving \eqref{TW}, on the right the corresponding solution $y$ of \eqref{ilsistema}.}\label{figura}
\end{figure}

\subsection{Convergence of the critical fronts connecting $0$ and $1$}

We now take into account the unique (up to translations) increasing front which connects $0$ and $1$, which henceforth we denote by $V_\eps^*=v_{\eps, c^*_\eps}$. We have seen that, for $\eps > \int_0^1 f^-(s) \, ds$, this is a regular front, ``normalized'' in such a way that $V_\eps^*(0)=1/2$; on the contrary, for $\eps \leq \int_0^1 f^-(s) \, ds$, $V_\eps^*$ is a discontinuous steady state, and in this case we have chosen to ``normalize'' it in such a way that 
it is discontinuous at $z=0$ and $V_\eps^*(0)=1/2$. 
For $\eps \to \bar{\eps}$, we have seen that the appearance of discontinuous steady states agrees with Theorem \ref{velconv0}, and it is even more justified if one writes the definition of weak $BV_{\textnormal{loc}}$-solution as in \cite[formula (6)]{BonObeOma} for (the regular solution) $V_\eps^*$ and lets $\eps \to \bar{\eps}$, precisely in view of the fact that $c_\eps^* \to 0$ for $\eps \to \bar{\eps}$ (see also \cite[Example 1.1 and Remark 1.1]{BonObeOma}). 

We now want to analyze the behavior of $V_\eps^*$ for $\eps \to 0$. 
Denoting by $H_0$ the Heaviside function 
$$
H_0(z)=\left\{
\begin{array}{ll}
0 & z < 0 \vspace{0.1cm} \\
1/2 & z = 0 \vspace{0.1cm} \\
1 & z > 0, 
\end{array}
\right.
$$ 
we have the following. 
\begin{theorem}\label{convAE}
For every $z \in \mathbb{R}$, it holds
$$
V_\eps^*(z) \to H_0(z),
$$
where the convergence is uniform in $\mathbb{R} \setminus \mathcal{I}_0$, $\mathcal{I}_0$ being an arbitrary neighborhood of the origin. Moreover, 
$$
(V_\eps^*)' \to \delta_0 \quad \textrm{ in } \mathcal{D}'(\mathbb{R}).
$$
\end{theorem}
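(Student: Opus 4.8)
The plan is to work directly with the semi-explicit structure of the discontinuous steady state, exploiting that $\bar{\eps}=\int_0^1 f^-(s)\,ds>0$, so that for all sufficiently small $\eps$ (namely $\eps<\bar{\eps}$) the front $V_\eps^*$ is the discontinuous steady state of Proposition \ref{dss}. For such $\eps$ the speed vanishes, hence the first-order reduction \eqref{ilsistema} with $b_\eps=0$ reads $y'=-f(v)/\eps$. Shooting forward from $v=0$ gives $y(v)=\frac{1}{\eps}\int_0^v f^-(s)\,ds$ on $(0,\alpha)$, which reaches the barrier $y=1$ at the value $v_\eps^-$ determined by $\int_0^{v_\eps^-}f^-(s)\,ds=\eps$; shooting backward from $v=1$ gives $y(v)=\frac{1}{\eps}\int_v^1 f(s)\,ds$ on $(\alpha,1)$, reaching $y=1$ at $v_\eps^+$ with $\int_{v_\eps^+}^1 f(s)\,ds=\eps$. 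By \eqref{cambio1}, $y=1$ corresponds to infinite slope, so $V_\eps^*$ increases smoothly from $0$ to $v_\eps^-$ on $(-\infty,0)$, jumps from $v_\eps^-$ to $v_\eps^+$ at $z=0$, and then increases smoothly from $v_\eps^+$ to $1$ on $(0,+\infty)$; in particular $v_\eps^-<\alpha<v_\eps^+$, since $\eps<\bar{\eps}$.

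The key point is then the elementary observation that $v_\eps^-\to 0$ and $v_\eps^+\to 1$ as $\eps\to 0$. Indeed $v\mapsto\int_0^v f^-(s)\,ds$ is continuous and strictly increasing on $(0,\alpha)$, vanishing only at $v=0$, so $\int_0^{v_\eps^-}f^-=\eps\to 0$ forces $v_\eps^-\to 0$; symmetrically $v\mapsto\int_v^1 f(s)\,ds$ is continuous and strictly decreasing on $(\alpha,1)$, vanishing only at $v=1$, so $\int_{v_\eps^+}^1 f=\eps\to 0$ forces $v_\eps^+\to 1$. This is exactly what rules out any surviving plateau at the intermediate equilibrium $\alpha$: as $\eps\to 0$ the single jump of $V_\eps^*$ sweeps out the whole interval $(0,1)$.

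Convergence then follows from the monotone structure alone. For every $z<0$ one has $0\le V_\eps^*(z)\le v_\eps^-$, while for every $z>0$ one has $v_\eps^+\le V_\eps^*(z)\le 1$; since the bounds $v_\eps^-$ and $1-v_\eps^+$ are independent of $z$ and tend to $0$, we obtain $V_\eps^*\to 0$ uniformly on $(-\infty,0)$ and $V_\eps^*\to 1$ uniformly on $(0,+\infty)$, hence uniform convergence to $H_0$ on $\mathbb{R}\setminus\mathcal{I}_0$ for any neighborhood $\mathcal{I}_0$ of the origin. At $z=0$ the chosen normalization gives $V_\eps^*(0)=\tfrac12=H_0(0)$, so the pointwise convergence holds on all of $\mathbb{R}$. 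Finally, for the distributional statement, since $V_\eps^*\to H_0$ pointwise with $0\le V_\eps^*\le 1$, the Lebesgue dominated convergence theorem yields, for every $\psi\in C_c^\infty(\mathbb{R})$, that $-\langle V_\eps^*,\psi'\rangle=-\int_{\mathbb{R}}V_\eps^*(z)\psi'(z)\,dz\to -\int_{\mathbb{R}}H_0(z)\psi'(z)\,dz=\psi(0)$, which is precisely $(V_\eps^*)'\to\delta_0$ in $\mathcal{D}'(\mathbb{R})$ (the jump has size $1-0=1$).

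The main obstacle is the second step, i.e. establishing $v_\eps^\pm\to 0,1$, equivalently that the jump of the steady state engulfs the intermediate equilibrium. A purely weak-convergence argument in the style of Theorems \ref{convinv} and \ref{conv2} — multiplying \eqref{TW} by a test function and letting $\eps\to 0$ — would only give $f(V^*)=0$ almost everywhere for the $L^\infty$-limit $V^*$, so that $V^*$ is a nondecreasing step function valued in $\{0,\alpha,1\}$; the normalization $V_\eps^*(0)=\tfrac12$ by itself does not exclude a plateau at $\alpha$ (for instance, when $\alpha<\tfrac12$, on the region $z<0$), and it is exactly the explicit control of $v_\eps^\pm$ provided above that closes this gap.
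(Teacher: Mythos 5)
Your proposal is correct and follows essentially the same route as the paper's proof: the first-order reduction at zero speed, the explicit branches $F^{\pm}$ with the implicitly defined jump endpoints $v_\eps^{\pm}$, their convergence to $0$ and $1$, and dominated convergence for the distributional statement. The only (harmless) deviations are that you work with the rescaled system \eqref{ilsistema} (barrier at $y=1$) instead of \eqref{ausiliaria} (barrier at $y=\eps$), and that you obtain the uniform convergence by the direct sandwich $0\le V_\eps^*\le v_\eps^-$ on $(-\infty,0)$ and $v_\eps^+\le V_\eps^*\le 1$ on $(0,+\infty)$, which is if anything slightly cleaner than the paper's appeal to the argument of Theorem \ref{conv2} via \cite[Lemma 2.4]{Diek}.
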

\begin{proof}
Instead of using the same method as for the proof of Theorem \ref{conv2}, we show how the first-order reduction can here be effective also in a convergence argument. For $\eps \leq \int_0^1 f^-(s) \, ds$ we notice that, through the change of variables of the previous section, the function $y$ (defined as in \eqref{cambio1}) associated with $V_\eps^*$ 
obeys the equation
\begin{equation}\label{ausiliaria}
\left\{
\begin{array}{l}
\displaystyle y'= - f(v), \vspace{0.1cm}\\
y(0)=0, y(1)=0, \; 0 < y(v) < \eps,
\end{array}
\right.
\end{equation}
namely \eqref{IIeq} with $c=0$. The solution of such a problem is given by 
$$
y_\eps(v)=\left\{
\begin{array}{ll}
F^-(v) & v \in [0, v_\eps^-) \vspace{0.1cm} \\ 
F^+(v) & v \in (v_\eps^+,1], 
\end{array}
\right.
$$
where $F^-(s)=-\int_0^s f(\tau) \, d\tau$, $F^+(s)=\int_s^1 f(\tau) \, d\tau$ and $v_\eps^-, v_\eps^+$ are implicitly defined by
\begin{equation}\label{leimplicite} 
F^-(v_{\eps^-})=\eps=F^+(v_{\eps^+}); 
\end{equation}
notice that $v_\eps^- < \alpha < v_\eps^+$. 
Since $v$ represents the value of the front profile, this means that the corresponding front $V_\eps^*$ is defined, $C^2$ and increasing both on the left and on the right of $0$ (where we have placed its discontinuity) and that 
$$
\lim_{z \to 0^-} V_\eps^*(z)=v_\eps^-, \qquad \lim_{z \to 0^+} V_\eps^*(z)=v_\eps^+.
$$ 
On decreasing of $\eps$, in view of \eqref{leimplicite} and taking into account the sign of $f$, we have that $v_\eps^-$ decreases and $v_\eps^+$ increases.  
As $f$ is strictly negative in a neighborhood of $0$ and strictly positive in a neighborhood of $1$, it is now clear that $v_\eps^- \to 0$, $v_\eps^+ \to 1$, otherwise \eqref{leimplicite} would be violated in the limit for $\eps \to 0$. However, this means that the limit $\bar{V}(z):=\lim_{\eps \to 0} V_\eps^*(z)$ (which can be constructed pointwise similarly as in the previous proofs) will be such that 
$$
\lim_{z \to 0^-} \bar{V}(z)=0, \qquad \lim_{z \to 0^+} \bar{V}(z)=1.
$$
Since $\bar{V}$ is the limit of positive increasing functions on $(-\infty,0)$ and on $(0, +\infty)$, this means that $\bar{V}=H_0$ almost everywhere. The pointwise convergence and the uniform convergence outside a neighborhood of $0$ follow from the same argument as for Theorem \ref{conv2} (notice that $V_\eps^*(0)=1/2$ by the previous positions). 

It remains to prove that $(V_\eps^*)' \to \delta_0$ in distributional sense; this follows similarly as in the proof of Theorem \ref{conv2} since
$$
-\lim_{\eps \to 0} \int_\mathbb{R} V_\eps^*(z) \psi'(z) \, dz = -\lim_{\eps \to 0} \int_{\textnormal{supp}\, \psi} V_\eps^*(z) \psi'(z) = -\int_{\textnormal{supp}\, \psi} H_0(z) \psi'(z) = \psi(0).
$$ 
\end{proof}
The argument used throughout the proof works as well for general strongly saturating diffusions, up to possibly replacing the constant $\eps$ bounding $y$ in \eqref{ausiliaria} with another constant depending continuously on $\eps$; of course, the differential equation therein remains instead the same, since the speed of the discontinuous stationary waves is equal to $0$ (and thus the part coming from the second-order operator disappears).  
\newline
In Figure \ref{figura2}, we illustrate Theorem \ref{convAE} for $P$ as in \eqref{lascelta} and $f$ as in \eqref{sceltaf}, for different values of $\eps$: notice that here $\bar{\eps}=\int_0^1 f^-(s) \, ds \approx 0.00853$, so for values of $\eps$ converging to $\bar{\eps}$ from above, the regular fronts (on the left) keep becoming steeper and already for $\eps=0.01$ (gray, dashed), corresponding to $c^* \approx 0.0006326$, the derivative of the profile in $0$ is almost infinite. On decreasing of $\eps$ starting from the value $\bar{\eps}$, the solutions are discontinuous steady states, and this is well seen both in the left picture and in the right one, where we depict the shape of the associated solution $y$ of \eqref{ilsistema} (with $q_1=0$, $q_2=1$). Again, solving \eqref{ilsistema} is the only way to know what is the initial derivative with which we can shoot the solution in order to reconstruct the wave profile. 

\begin{figure}[ht!]
\includegraphics[scale=0.47]{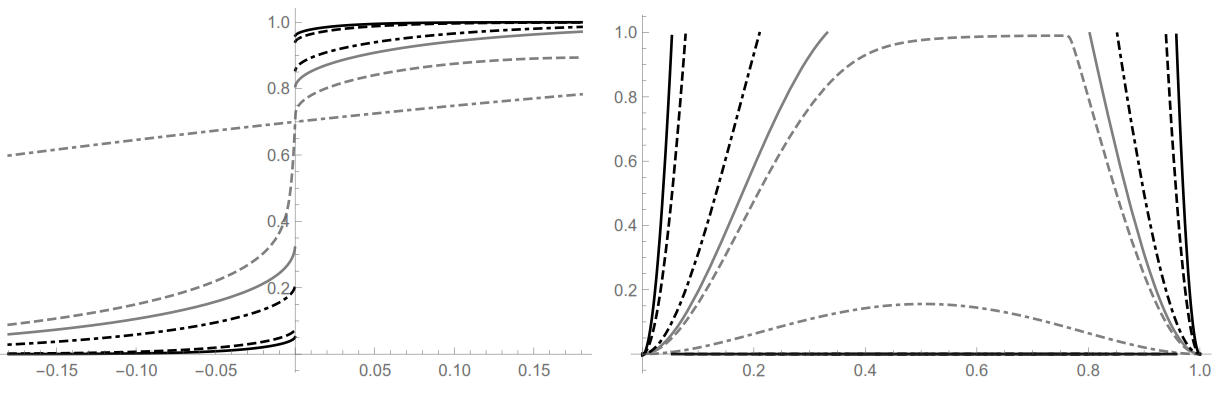}
\caption{For $f$ as in \eqref{sceltaf} and $\eps=0.1$ (gray, dot-dashed), $\eps=0.01$ (gray, dashed), $\eps=0.008$ (gray), $\eps=0.005$ (black, dot-dashed), $\eps=0.001$ (black, dashed), $\eps=0.0005$ (black) we depict: on the left, the critical traveling fronts connecting $0$ and $1$ and solving \eqref{TW}, on the right the corresponding solution $y$ of \eqref{ilsistema}.}\label{figura2}
\end{figure}

\begin{remark}
\textnormal{It is likely that a similar approach to the one developed in \cite{GilKer} could provide sharper results than the ones presented here, possibly allowing to relax the regularity assumptions on the reaction term. However, proceeding similarly as in \cite{GilKer}, the integral equation to be studied could here be obtained from \eqref{IIo} and would read as
$$
x(t)=f(t) + \int_0^t \frac{g(s) \sqrt{1-x(s)^2}}{x(s)} \, ds.
$$
Though seeming not particularly more complicated than the one studied in \cite{GilKer}, such equation would probably require a number of ad-hoc preliminary results, the ones in \cite{Gil}  not being directly applicable. For this reason, we have preferred to stick to a more elementary and direct approach, focusing on the nature of the results for the PDE model \eqref{laPDE} rather than on their optimal statement}.
\end{remark}

\begin{remark}\label{remMink}
\textnormal{The (positive definite Euclidean) curvature operator is usually mentioned together with its analogous in Lorentz spaces, the so-called \emph{Minkowski curvature operator}, given by choosing
$$
P(s)=\frac{s}{\sqrt{1-s^2}}. 
$$ 
It is natural to wonder what part of the present investigation can be extended to this qualitatively different case. On the one hand, the existence of fronts for such a kind of operator was analyzed in \cite{CoeSan}, where it was shown that for monostable reaction terms a similar picture as for the linear case appears, though the control required on the growth of the reaction is therein slightly stronger. More precisely, in \cite{CoeSan} it is assumed that 
\begin{equation}\label{controllomink}
f(s) \leq \frac{M(s-\alpha)}{\sqrt{1+M^2 (s-\alpha)^2}} \quad \textrm{ for every } s \in [\alpha, 1]
\end{equation}
in order to state that the critical speed $c^+$ for planar fronts connecting $\alpha$ and $1$ for the equation  
$$
u_t= \textnormal{div}\,\left(\frac{\nabla u}{\sqrt{1-\vert \nabla u \vert^2}}\right) + f(u) 
$$
satisfies the upper bound $c^+ \leq 2\sqrt{M}$. The problem here is that, given $f$ for which \eqref{controllomink} holds for some $M$, it is not true that $g(s)=f(s)/\eps$ fulfills \eqref{controllomink} for $M'=M/\eps$ (compare with Remark \ref{stimaM} above); in fact, if $f$ reaches a maximum value $K > 0$ inside the interval $(0, 1)$, it suffices that $K/\eps > 1$ in order for \eqref{controllomink} to fail at least in one point of the interval, regardless of $M$. It is thus not clear what portrait to be expected for $\eps \to 0$; one may wonder whether 
heteroclinic fronts between $\alpha$ and $1$ still exist, or whether any limit procedure necessarily passes through nonmonotone traveling wave solutions. The bistable case could possibly be even more complicated. These issues will be the object of a future investigation. }

\end{remark}

\small

\end{document}